\newtheorem{thm}{Theorem}[section]
\newtheorem{cor}[thm]{Corollary}
\newtheorem{lem}[thm]{Lemma}
\newtheorem{prop}[thm]{Proposition}
\theoremstyle{remark}
\newtheorem*{rem}{Remark}
\newcounter{remarkscounter}
\newenvironment{remarks}
{\medskip\noindent{\it
Remarks.}\begin{list}{{\rm(\arabic{remarkscounter})}
}{\usecounter{remarkscounter}

\setlength{\labelsep}{\fill} \setlength{\leftmargin}{0pt}
\setlength{\itemindent}{\fill}
\setlength{\labelwidth}{\fill}\setlength{\topsep}{0pt}
\setlength{\listparindent}{0pt}}} {\end{list}}
\numberwithin{equation}{section}
\newcommand{\A}{\mathbb{A}}
\newcommand{\GL}{\mathrm{GL}}
\newcommand{\ZZ}{\mathbb{Z}}
\newcommand{\Gal}{\mathrm{Gal}}
\newcommand{\QQ}{\mathbb{Q}}
\newcommand{\lto}{\longrightarrow}
\newcommand{\OO}{\mathcal{O}}
\newcommand{\CC}{\mathbb{C}}
\newcommand{\RR}{\mathbb{R}}
\newcommand{\N}{\mathrm{N}}
\newcommand{\quash}[1]{}
\theoremstyle{definition}
\renewcommand{\bar}{\overline}
\numberwithin{equation}{subsection}
\newcommand{\one}{\mathbbm{1}}
\newcommand{\W}{\mathbb{W}}
\begin{document}

\title{A nonabelian trace formula}
\author{Jayce R.~ Getz}
\address{Department of Mathematics\\
Duke University\\
Durham, NC  27708-0320}
\email{jgetz@math.duke.edu}
\author{P.~Edward Herman}
\address{Department of Mathematics\\
University of Chicago\\
Chicago, IL 60637}
\email{peherman@math.uchicago.edu}
\subjclass[2010]{Primary 11F70, Secondary 11F66}

\begin{abstract} Let $E/F$ be an extension of number fields with $\mathrm{Gal}(E/F)$ simple and nonabelian.  In a recent paper the first named author suggested an approach to nonsolvable base change and descent of 
automorphic representations of $\mathrm{GL}_2$ along such an extension.
Motivated by this we prove a trace formula whose spectral side is a weighted sum over cuspidal automorphic representations of $\mathrm{GL}_2(\mathbb{A}_E)$ that are isomorphic to their $\mathrm{Gal}(E/F)$-conjugates.
\end{abstract}

\maketitle

\tableofcontents

\section{Introduction} \label{sec-tf}

Let $F$ be a number field and let $H,G$ be reductive $F$-groups.  The Langlands functoriality conjecture predicts that given an $L$-map
\begin{align} \label{L-map}
{}^LH \lto {}^LG
\end{align}
one should have a corresponding transfer of packets of automorphic representations of $H(\A_F)$ to packets of automorphic representations of $G(\A_F)$ compatible with local transfers at every place $v$ of $F$.  If the rank of $G$ is large, in most cases where this conjecture is known the image of the transfer map induced by \eqref{L-map} has a very simple description, namely it consists of $L$-packets fixed by a finite-order automorphism of $G$.  One has a standard technique available to isolate representations that are isomorphic to their twists under an automorphism of
$G$, namely the twisted trace formula, and one can compare this to a trace formula on $H$ to establish the desired transfer; see \cite{AC} for the first set of examples in arbitrary rank and \cite{Arthur} \cite{Mok} for more general examples that represent the state of the art.  The alternate approach via converse theory is different, but the set of $L$-maps treated in arbitrary rank is roughly the same \cite{CKPSS}.

If one could isolate automorphic representations of a group $G$ invariant under a non-cyclic group of automorphisms then one could hope to establish new cases of Langlands functoriality.  In this paper we take the first step in this direction in a very special case.  

\subsection{The trace formula}
It is convenient to first explicitly describe the trace formula we will employ.
Let $F$ be a number field, let $K_\infty \leq \GL_2(F_\infty)$ be
the standard maximal compact subgroup and let
$$
\GL_2(\A_F)^1:=\mathrm{ker}\left( |\cdot| \circ \det:\GL_2(\A_F)\lto \RR_{>0}\right)
$$
where $|\cdot|$ is the usual idelic norm.  
Moreover let $B=TU \leq \GL_2$ be the standard Borel subgroup of upper triangular matrices with 
 $T\leq \GL_2$ the subgroup of diagonal matrices, $U$ the unipotent radical of $B$ and 
\begin{align}
T(\A_F)^1=\left\{\left(\begin{smallmatrix} t_1 & \\ & t_2\end{smallmatrix} \right) \in T(\A_F):|t_1|=|t_2|=1\right\}.
\end{align}

Using these subgroups one 
defines spaces of test functions 
$$
\mathcal{C}(\GL_2(\A_F)^1)
$$ 
as in \S \ref{sec-transf} below. The key property of these spaces of test functions is that, although their elements are not compactly supported functions at any place, the trace formula is still valid for them, as proven in \cite{FL} (and in greater generality in \cite{FLM} and \cite{FL2}).  Explicitly, in \cite{FL} one finds a proof of the following theorem:

\begin{thm}[Arthur-Finis-Lapid-M\"uller-Selberg] \label{thm-tf} For any $f \in \mathcal{C}(\GL_2(\A_F)^1)$ the sum 
\begin{align*}
TF(f):&=\sum_{\gamma/\sim} \mathrm{meas}(\GL_{2\gamma}(F) \backslash \GL_{2\gamma}(\A_F)^1) \int_{\GL_{2\gamma}(\A_F) \backslash \GL_2(\A_F)}f(x^{-1}\gamma x)dx\\
&+\mathrm{meas}(T(F)\backslash T(\A_F)^1)\sum_{t \in  T(F)} \int_{K_\infty \times \GL_2(\widehat{\OO}_E)}\int_{U(\A_F)}f(k^{-1}tuk)\omega(t,u)dudk\\
&+\mathrm{meas}(T(F) \backslash T(\A_F)^1)\sum_{t \in  T(F)}\lambda_{t,S}\int_{K_\infty \times \GL_2(\widehat{\OO}_E)}\int_{U(\A_F)}f(k^{-1}tuk)dudk
\\&-\mathrm{tr}\,R_{\mathrm{res}}(f)+\frac{1}{4\pi}\int_{i\RR}\mathrm{tr}\, M^{-1}(s)M'(s)I(f,s) ds-\frac{1}{4}\mathrm{tr}\,M(0)I(f,0)
\end{align*}
is absolutely convergent and equal to $\mathrm{tr}\,R_{\mathrm{cusp}}(f)$.
\end{thm}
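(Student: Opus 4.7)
The strategy is to invoke the Arthur trace formula in the extended form made available for the test function space $\mathcal{C}(\GL_2(\A_F)^1)$ by Finis, Lapid, and M\"uller. The starting point is Arthur's coarse (invariant) trace formula for $\GL_2$, valid for compactly supported $f$, which asserts the equality of a geometric expansion (a sum of weighted orbital integrals indexed by semisimple conjugacy classes) and a spectral expansion (cuspidal trace plus residual and continuous contributions). My plan is first to specialize Arthur's general identity to the rank-one group $\GL_2$, writing out each term explicitly, and then to pass to the enlarged test function class using the quantitative estimates of \cite{FL}, \cite{FLM}, \cite{FL2}.

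For the geometric side, the semisimple conjugacy classes in $\GL_2(F)$ split into three types: central, elliptic, and split hyperbolic. Central and elliptic classes contribute the standard orbital integrals in the first line, indexed by $\gamma/\!\sim$ with centralizer $\GL_{2\gamma}$. The hyperbolic and mixed hyperbolic-unipotent contributions arise from elements conjugate to $tu$ with $t \in T(F)$ and $u \in U(F)$; after Arthur's truncation and the standard rank-one computation (compare Jacquet--Langlands), these produce the two weighted integrals over $T(F)$ in lines two and three. Here $\omega(t,u)$ is the Arthur weight factor coming from the $(G,M)$-family attached to the minimal parabolic, and $\lambda_{t,S}$ is the global constant arising from the regularization of the logarithmic derivative of the intertwining operators at the relevant places $S$.

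On the spectral side, the Langlands decomposition of $L^2(\GL_2(F) \backslash \GL_2(\A_F)^1)$ gives cuspidal, residual, and continuous pieces. The cuspidal trace is $\mathrm{tr}\, R_{\mathrm{cusp}}(f)$, the residual (one-dimensional) part contributes $\mathrm{tr}\, R_{\mathrm{res}}(f)$, and the continuous part, unfolded via Eisenstein series and evaluated using the Maass--Selberg relations, gives the principal-value integral $\tfrac{1}{4\pi}\int_{i\RR}\mathrm{tr}\, M^{-1}(s) M'(s) I(f,s)\, ds$ together with the boundary term $-\tfrac{1}{4}\mathrm{tr}\, M(0) I(f,0)$ arising from the pole of the intertwining operator at $s=0$. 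Collecting the non-cuspidal spectral terms on the geometric side produces the displayed identity.

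The main obstacle, and the reason one must invoke \cite{FL}, \cite{FLM}, \cite{FL2} rather than Arthur's original papers, is that $\mathcal{C}(\GL_2(\A_F)^1)$ consists of functions that are not compactly supported at the archimedean places. Absolute convergence of each term must therefore be re-established. On the geometric side this requires estimates for (weighted) orbital integrals uniform in $\gamma$, of the sort proved in \cite{FL}; on the spectral side it requires polynomial bounds on the intertwining operators $M(s)$ and $M^{-1}(s)M'(s)$ along the unitary axis, which is the main analytic input of Finis--Lapid--M\"uller. Once these estimates are in place, a density/approximation argument lets the identity pass from Arthur's compactly supported setting to all $f \in \mathcal{C}(\GL_2(\A_F)^1)$, yielding the theorem as stated.
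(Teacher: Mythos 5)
Your proposal is correct and follows essentially the same route as the paper, which does not prove this theorem itself but quotes it as \cite[Theorem 1]{FL} (noting that the restriction to $F=\QQ$ there is only for convenience). Your outline --- specialize Arthur's formula to $\GL_2$ for compactly supported test functions, then extend to $\mathcal{C}(\GL_2(\A_F)^1)$ by showing each term defines a continuous seminorm and invoking density --- is precisely the strategy of the cited work, as the paper's own remark about the terms defining continuous seminorms confirms.
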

\noindent Here the sum on $\gamma$ is over elliptic conjugacy classes in $\GL_2(F)$ and $R_{\mathrm{res}}(f)$ (resp.~$R_{\mathrm{cusp}}(f)$) denotes the restriction of the usual convolution operator $R(f)$ to the residual (resp.~discrete) spectrum.  Moreover $\GL_{2\gamma}$ denotes the centralizer of $\gamma$ in $\GL_2$.

If $f \in C_c^\infty(\GL_2(\A_F)^1)$ then this formula is a special case of the Arthur-Selberg trace formula.  In general this is \cite[Theorem 1]{FL}, with a caveat: the theorem in loc.~cit. is stated only for $F=\QQ$, but this is only for convenience (see the introduction of \cite{FL}).  We will not explicate the definition of the undefined terms and measures in the expression above; instead we refer the reader to loc.~cit.  From a qualitative point of view, the key point is that the first line of the equality in Theorem \ref{thm-tf} is equal to $\mathrm{tr} \,R_{\mathrm{cusp}}(f)$ up to ``error terms,'' i.e.~the rest of the expression.

\begin{rem} The result is actually better than stated above, for the terms in the trace formula define continuous seminorms on $\mathcal{C}(\GL_2(\A_F)^1)$ (see \cite[Theorem 1]{FL}).  In particular, it is shown in loc.~cit. that the trace norm of $\mathrm{tr}\,R_{\mathrm{cusp}}(f)$ is bounded.
\end{rem}

\subsection{The main theorem}

Let $E/F$ be a Galois extension
with 
$$
\Gal(E/F)=\langle \iota, \tau \rangle
$$ where $|\iota|=2$ and $\tau$ has odd order.  
\begin{rem} Any nonabelian finite simple group can be generated by an involution and an element of odd order by a result of Guralnick and Malle
 (see \cite[Theorem 1.6]{GuMa} and the paragraph after it).
\end{rem}

Let 
\begin{align}
F':={}^{\iota=1}E \quad \textrm{ and } \quad F_0 :={}^{\tau=1}E
\end{align}
denote the fixed fields of $\iota$ and $\tau$, respectively.  We also let
$$
G:=\mathrm{Res}_{\OO_E/\OO_F}\GL_2 \quad \textrm{ and } \quad G_0:=\mathrm{Res}_{\OO_{F_0}/\OO_F}\GL_2.
$$

Let $K_\infty\leq G(F_\infty)$ be the standard maximal compact subgroup and  let
$$
K^\infty:=G(\widehat{\OO}_F) \leq G(\A_F^\infty).
$$   As usual we set $K=K_\infty K^\infty$.  Finally let $S$ be a $\Gal(E/F)$-invariant set of places of $E$ containing all infinite places,  all places where $E/\QQ$ is ramified, and all dyadic places.  We also denote by $S$ the set of places of any subfield $E \geq k \geq F$ dividing $S$.

Let $\Psi \in C_c^\infty(G(\A_F)^1//K)$.
In \S \ref{sec-V} we define certain functions
$$
V:=V_{\Psi} \in \mathcal{C}(G(\A_F)^1) \quad \textrm{ and } \quad b_{E/F_0}V \in \mathcal{C}(G_0(\A_F)^1)
$$ 
such that the following theorem holds:

\begin{thm} \label{thm-main}  
Let $f \in C_c^\infty(G(\A_F)^1)$.  Consider 
\begin{align} \label{star1}
2\sqrt{d}_E\sum_{\substack{\Pi\\ \Pi \cong {}^{\iota} \Pi \cong {}^{\tau} \Pi}}\mathrm{tr}\,\Pi(f*\Psi) \alpha^{\Pi'_S}(W_{0S})L(1,\Pi' \times \Pi'^{\vee} \times \eta^S),
\end{align}
where the sum is over cuspidal automorphic representations of $G(\A_F)^1$ spherical at every place and $\Pi'$ is a cuspidal automorphic representation of $\GL_2(\A_{F'})^1$ whose base change to $E$ is $\Pi$.

 The sum \eqref{star1} is
absolutely convergent.  If $f^{G_0} \in C_c^\infty(G_0(\A_F)^1)$ is a transfer of $f$ then \eqref{star1} is equal to 
\begin{align*}
\frac{1}{[E:F_0]}TF(f^{G_0}*b_{E/F_0}(V)).
\end{align*}
where $TF$ is defined as in Theorem \ref{thm-tf}.
\end{thm}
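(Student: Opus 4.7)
The plan is to spectrally expand the right-hand side via Theorem~\ref{thm-tf} and match the result to the left-hand side in two substeps: a cyclic base change identity for the extension $E/F_0$ (which is Galois with cyclic group $\langle\tau\rangle$ of odd order) converts a cuspidal trace sum on $G_0$ into a cuspidal trace sum on $G$ restricted to $\tau$-invariant representations, while the defining spectral signature of the function $V$, to be established in \S\ref{sec-V}, simultaneously detects $\iota$-invariance and produces the $L$-value weight.

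In detail, I would first apply Theorem~\ref{thm-tf} to the Schwartz function $f^{G_0}*b_{E/F_0}V \in \mathcal{C}(G_0(\A_F)^1)$ to obtain
\begin{align*}
TF(f^{G_0}*b_{E/F_0}V) = \mathrm{tr}\,R_{\mathrm{cusp}}(f^{G_0}*b_{E/F_0}V)=\sum_{\pi_0}\mathrm{tr}\,\pi_0(f^{G_0}*b_{E/F_0}V),
\end{align*}
the sum ranging over cuspidal automorphic representations $\pi_0$ of $G_0(\A_F)^1 = \GL_2(\A_{F_0})^1$; the absolute convergence of this sum, and by descent the convergence of \eqref{star1}, follows from the continuous-seminorm statement in the remark after Theorem~\ref{thm-tf}. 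Next I would apply Arthur-Clozel cyclic base change for $\GL_2$ along $E/F_0$: by construction $f^{G_0}$ is a transfer of $f$ and $b_{E/F_0}V$ is a base-change transfer of $V$, so the standard matching of twisted and untwisted orbital integrals yields
\begin{align*}
\mathrm{tr}\,\pi_0(f^{G_0}*b_{E/F_0}V) = \mathrm{tr}\,\Pi(f*V), \qquad \Pi=\mathrm{BC}_{E/F_0}(\pi_0),
\end{align*}
whenever $\Pi$ is cuspidal, and the fiber of the base change map over each $\tau$-invariant cuspidal $\Pi$ has cardinality $[E:F_0]$ (accounting exactly for the prefactor $[E:F_0]^{-1}$), while $\pi_0$ whose base change is non-cuspidal must be shown to contribute zero against the $L$-value weight introduced below. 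Finally, by the spectral property of $V$ built in \S\ref{sec-V},
\begin{align*}
\mathrm{tr}\,\Pi(f*V)=2\sqrt{d}_E\,\mathrm{tr}\,\Pi(f*\Psi)\,\alpha^{\Pi'_S}(W_{0S})\,L(1,\Pi'\times\Pi'^{\vee}\times\eta^S)\cdot\one_{\Pi\cong {}^{\iota}\Pi},
\end{align*}
i.e., the Rankin--Selberg-type unfolding encoded in $V$ detects descent to $F'=E^{\iota=1}$ via the central $L$-value. Combining the three identities produces the claimed equality.

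The central obstacle is the third step, which rests on the construction of $V$ in \S\ref{sec-V}: one must design $V$ to be a Schwartz function on $G(\A_F)^1$ (so that both $V$ and its base-change transfer $b_{E/F_0}V$ live in the function spaces to which the trace formula of Theorem~\ref{thm-tf} applies), whose spectral signature against a spherical cuspidal $\Pi$ is exactly the completed $L$-value weight above. This requires a delicate regularization of a Rankin--Selberg/Poincar\'e-type kernel together with a unfolding computation that identifies the Whittaker factor $\alpha^{\Pi'_S}(W_{0S})$ with the ramified contribution at $S$. A secondary but essential point is handling the non-cuspidal base-change fibers (coming from $\pi_0$ automorphically induced from $E$) and the residual/Eisenstein pieces on the $G_0$ side, so that they are absorbed correctly and do not disturb the matching; maintaining absolute convergence throughout this chain of spectral identities is the main technical hurdle.
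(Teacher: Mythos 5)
Your proposal follows the paper's proof essentially verbatim: expand $TF$ spectrally via Theorem \ref{thm-tf}, pass to $G$ by the base change identity for the cyclic extension $E/F_0$ with fiber count $[E:F_0]$, and invoke the spectral property of $V$ (Proposition \ref{prop-FLO}) to produce the $\iota$-invariance indicator together with the $\alpha^{\Pi'_S}(W_{0S})L(1,\Pi'\times\Pi'^{\vee}\times\eta^S)$ weight. The one loose end you flag --- representations $\pi_0$ with non-cuspidal base change --- does not in fact arise, since $[E:F_0]=|\tau|$ is odd and a cuspidal representation of $\GL_2(\A_{F_0})$ can have non-cuspidal cyclic base change only when it is automorphically induced from a quadratic subextension, which forces even degree.
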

\noindent
 Here $W_{0S} \in \mathcal{W}(\pi_S,\psi_S)$ is a Whittaker newform (see \S \ref{ssec-Wh-gen}) for $\Pi$ and $\alpha^{\Pi'_S}(W_{0S})$ is defined as in \eqref{a}.
Our conventions regarding transfers of functions are described in \S \ref{sec-transf}.  

The proof of Theorem \ref{thm-main} boils down to carefully constructing the function $V$ and then proving that it lies in the space of test functions to which the trace formula applies.  We will explain this in broad terms in \S \ref{ssec-outline} below.

\subsection{Possible applications}

The authors view Theorem \ref{thm-main} as a nontrivial step towards proving the existence of base change and descent of automorphic representations of $\GL_2$ along the extension $E/F$.  We illustrate this with a typical example.  Assume that $\Gal(E/F)$ and $\Gal(E/F)=\langle \iota,\tau \rangle$ with $|\iota|=2$ and $|\tau|$ odd.  
One does not expect all cuspidal representations $\pi$ of $\GL_2(\A_E)$ that are $\Gal(E/F)$-invariant to descend to $\GL_2(\A_F)$, but Langlands functoriality predicts that one can choose a finite set of characters $\chi:E^{\times} \backslash \A_E^{\times} \to \CC^\times$ depending on $\Gal(E/F)$ such that if $\pi$ is $\Gal(E/F)$-invariant then $\pi \otimes \chi$ descends to $\GL_2(\A_F)$ for some $\chi$ \cite[Proof of Lemma 5.1]{Getz-Ap}.  This additional complication disappears if we assume in addition that $\Gal(E/F)$ is the universal perfect central extension of a finite simple group (compare loc.~cit.).  Thus let us assume that $\Gal(E/F)$ is a universal perfect central extension of a finite simple group. For simplicity we exclude the case where $\Gal(E/F)$ is the universal perfect central extension of the icosahedral group (this case is treated from a slightly different perspective in \cite{Getz-Ap}).

Let 
$$
f \in C_c^\infty(G(\A_F)^1//K).
$$
Consider the two quantities
\begin{align} \label{over-F_0}
TF(f^{G_0}*b_{E/F_0}(V))
\end{align}
and
\begin{align} \label{over-F}
TF(f^{\GL_2}*b_{E/F}(V))
\end{align}
 where $b_{E/?}$, $? \in \{F_0,F\}$ is the base change homomorphism defined on (a subalgebra of) $\mathcal{C}(G(\A_F)^1)$ in \S \ref{sec-transf}, and $f^{G_0} \in C_c^\infty(G_0(\A_F)^1),f^{\GL_{2}} \in C_c^\infty(\GL_2(\A_F)^1)$ are transfers of $f$.  Notice that by Theorem \ref{thm-tf} the quantities \eqref{over-F_0} and \eqref{over-F} are given by (admittedly complicated) absolutely convergent sums and integrals that are geometric in nature.  It is not hard to argue as in \cite{Getz-Ap} that the equality of \eqref{over-F_0} and \eqref{over-F} for all $f$ implies that any $\Gal(E/F)$-invariant cuspidal automorphic representation of $G(\A_E)$ spherical at all places descends to $\GL_2(\A_F)$ (compare \cite[Theorem 1.2]{Getz-Ap}). 

\begin{rem}
 The existence of base changes of cuspidal automorphic representations $\pi$ of $\GL_2(\A_F)^1$ spherical at all places would also follow if we knew somehow that $\mathrm{tr}\,  \pi(b_{E/F}(V)) \neq 0$, but it is not clear how to prove this without knowing the existence of the base change.
  \end{rem}

We emphasize, once again, that \eqref{over-F_0} and \eqref{over-F} are built out of absolutely convergent geometric sums.  This is not easy to come by in this subject.  The primary difficulty in all cases where Langlands' beyond endoscopy proposal \cite{LanglBeyond} has been carried out \cite{Venk}, \cite{Herman}, \cite{PJW} has been obtaining an absolutely convergent sum out of limiting forms of trace formulae, and, to date, it has never been carried out to prove functoriality in a case that was not known by some other method.  As explained in \cite{Sarnak}, this task is expected to become even more difficult in other situations.  One motivation for proving Theorem \ref{thm-main} is that we can contemplate proving that \eqref{over-F_0} is equal to \eqref{over-F}, and this might provide a way to move around the analytic difficulties of the beyond endoscopy proposal in certain cases.

\subsection{Outline} \label{ssec-outline}

In the following section we give our conventions on Haar measures for the paper.  
In \S \ref{sec-Whitt} we record some known results on Whittaker functions, and prove the crucial result Corollary \ref{cor-arch-CSS}, which, combined with the Casselman-Shalika-Shintani formula, allows us to construct Hecke operators $\W_g$ whose Hecke eigenvalues are the values of Whittaker functions\footnote{Analogues of these functions in the function field case  play a key role in the approach of L.~Lafforge to functoriality (see, e.g.~\cite{Lafforge}).} at $g \in G(\A_F)$.  
We then record what we mean by transfers of test functions in our setting in \S \ref{sec-transf}.  

Finally in \S \ref{sec-V} we construct $V$.  The idea is to use the special Hecke operators $\W_g$ to 
build a weight consisting of the period of a cusp form over a unitary group into the trace formula.  This weight isolates representations of $G(\A_F)$ invariant under $\iota$ by work of Jacquet (we actually require the refinements of \cite{FLO}).
These $V$ are then used in \S \ref{ssec-proof} to prove our main result, Theorem \ref{thm-main}.

\section{Haar measures} \label{sec-measures}

Throughout this paper we normalize Haar measures as in the 
following table, which is consistent with the normalizations in \cite{FL}:
\begin{center}
\begin{tabular}{ l l }
\hline
group & normalization\\
\hline
discrete & counting measure\\
$\RR$   & Lebesgue measure\\
$i\RR$ & through $x \mapsto ix$\\
$E_w$, $w$ archimedian & $[E:\RR]$ times the Lebesgue measure on $E_w \cong \RR^{[E:\RR]}$\\
$E_w$, $w$ nonarchimedian & $\mathrm{meas}(\OO_{E_w})=1$\\
$\RR^\times$, $\RR_{>0}$ & $d x^\times=\frac{dx}{|x|}$\\
$E_w^\times$, $w$ archimedian & $d x^\times=\frac{dx}{|x|}$ \\
$E_w^\times$, $w$ nonarchimedian & $\mathrm{meas}(\OO_{E_w}^\times)=1$\\
$\A_E^\times$ & product measure\\
$(\A_E^\times)^1$ & compatible with $\A_E^\times =(\A_E^\times)^1 \times \RR_{>0}$\\
$Z_{\GL_2}(\A)$ & through $t \mapsto \left( \begin{smallmatrix} t &\\ & t \end{smallmatrix} \right)$\\
$U(E_w)$, $U(\A_E)$ & through $x \mapsto \left( \begin{smallmatrix} 1 &x\\ & 1 \end{smallmatrix}\right)$\\
$T(E_w)$, $T(\A_E)$, $T(\A_E)^1$ & through $(t_1,t_2) \mapsto \left( \begin{smallmatrix} t_1 &\\ & t_2 \end{smallmatrix}\right)$\\
$\GL_2(\OO_{E_w})$, $K_w$ ($w$ archimedian) & measure $=1$\\
$\GL_2(E_w)$, $\GL_2(\A_E)$ & compatible with the Iwasawa decomposition\\
$\GL_2(\A_E)^1$ & compatible with $\GL_2(\A_E)=\GL_2(\A_E)^1 \times \RR_{>0}$\\
\hline
\end{tabular}
\end{center}
Here $(\A_E^\times)^1=\mathrm{ker}(|\cdot|)$.  We will also require a Haar measure on $H(\A_F)$ and $H(F) \backslash H(\A_F)$, where $H$ is the group of \S \ref{ssec-H}, but for the purposes of this paper there is no need to specify it; we simply fix one.

\section{Whittaker functions} \label{sec-Whitt}

In this section we collect some generalities on Whittaker functions.  The main results are corollaries \ref{cor-CSS} and \ref{cor-arch-CSS}, which construct Hecke operators whose Hecke eigenvalues are the values of Whittaker functions.

None of the results of this section use anything about the extension $E/F$, so for this section $E$ is an arbitrary number field.  We let $K^\infty:=\GL_2(\widehat{\OO}_E)$ and let $K_\infty \leq \GL_2(E_\infty)$ be the standard maximal compact subgroup.

\subsection{Generalities} \label{ssec-Wh-gen}
Let $\varphi$ be a smooth cuspidal function in  $L^{2}(\GL_2(E) \backslash \GL_2(\A_E)^1)$ and let $\psi:E \backslash \A_E \to \CC^\times$ be the additive character
\begin{align} \label{psi}
\psi:=\psi_\QQ \circ \mathrm{tr}_{\A_E/\A_\QQ}
\end{align}
where $\psi_{\QQ}:=\psi_{\infty} \times \prod_{p}\psi_p$ with
 \begin{align*}
\psi_{\infty}(a):&=e^{-2\pi i a} \quad \textrm{ and } \quad
\psi_{p}(a):=e^{2 \pi i \mathrm{pr}(a)}.
\end{align*}
Here the principal part $\mathrm{pr}(a) \in \ZZ[p^{-1}]$ is any element such that $a \equiv \mathrm{pr}(a) \pmod{\ZZ_p}$.
 We set
\begin{align}
\psi_{m}(x):=\psi(m x)
\end{align}
for $m \in E$.
As usual for each $m \in E$ we have  Whittaker functions
\begin{align} \label{Whitt}
W_{m}^{\varphi}(g):&=\int_{E \backslash \A_E} \varphi\left( \begin{pmatrix} 1 & x \\ 0 & 1 \end{pmatrix} g\right)\psi(-mx)dx.
\end{align}
To ease notation we set
$$
W^{\varphi}:=W_1^{\varphi}.
$$

Since $\varphi$ is cuspidal $W_m^{\varphi}(g)$ is identically zero as a function of
 $g$ if $m=0$.  By taking Fourier expansions we obtain the Whittaker expansion
\begin{align} \label{Wh-exp}
\varphi(g)=\frac{1}{\sqrt{d}_E}\sum_{m \in E^{\times}}W_m^{\varphi}(g).
\end{align}

Assume now that $\varphi$ is in the space $V_\pi$ of a cuspidal automorphic representation $\pi$ of $\GL_2(\A_E)$.  Then  
$W^{\varphi}_m(g)$ is identically zero as a function of $g \in \GL_2(\A_E)$ if and only if $m=0$.  For $m \in E^{\times}$ we obtain nonzero intertwining maps
\begin{align*}
V_{\pi} &\lto \mathcal{W}(\pi,\psi_m)\\
\varphi &\longmapsto W^{\varphi}_m
\end{align*}
to the Whittaker model $\mathcal{W}(\pi,\psi_m)$ of $\pi$ with respect to $\psi_m$.  Here $\mathcal{W}(\pi,\psi_m)$ and its local analogues are defined as in \cite[\S 1.2]{Cogdell}.

We recall that Whittaker functions factor into local Whittaker functions:
\begin{align}
W^{\varphi}_m(g)=\prod_w  W^{\varphi}_{m,w}(g)
\end{align}
where the product is over places $w$ of $E$ and  $W_{m,w}^{\varphi}  \in \mathcal{W}(\pi_w,\psi_{mw})$, the Whittaker model of $\pi_w$ with respect to $\psi_{mw}$.   

We note that for a smooth $\varphi$, a $\beta \in E^{\times}$, and $g \in \GL_2(\A_E)$ one has
\begin{align} \label{move-be}
W_{\beta}^{\varphi}\left(g\right)=W^{\varphi}\left(
\left(\begin{smallmatrix}\beta & \\ & 1 \end{smallmatrix}\right)g \right)
\end{align}
which follows from a change of variables in \eqref{Whitt} together with the left $\GL_2(E)$-invariance of $\varphi$. 

Finally we record a definition used in the statement of Theorem \ref{thm-main} above.  Let 
\begin{align} \label{delta}
\delta \in \A_E^\infty \cap \widehat{\OO}_E
\end{align}
be an idele whose associated ideal is the absolute different $\mathcal{D}_E$ of $E$ over $\QQ$.
Let $\pi_w$ be a generic irreducible admissible representation of $\GL_2(E_w)$.  We say that $W \in \mathcal{W}(\pi_w,\psi_w)$ is a \textbf{Whittaker newform} if the following conditions hold:
\begin{enumerate}
\item It is $K_w$-finite.
\item The identity 
$$
|\delta|_w^{s-1/2}\int_{E^\times_w}W\left(\begin{smallmatrix} m & \\ & 1 \end{smallmatrix} \right)|m|_w^{s-1/2}dm^\times=L(s,\pi_w)
$$
holds and the integral on the left is absolutely convergent for $\mathrm{Re}(s)$ large enough.
\item If $w$ is archimedian, then $W$ has minimial $K_w$-type among all vectors $W'\in \mathcal{W}(\pi_w,\psi_w)$ such that (1) and (2) hold
\item If $w$ is nonarchimedian, then $W$ is a newform.
\end{enumerate}
The factor $|\delta|_w^{s-1/2}$ occurs here because we have chosen our standard character $\psi_w$ to have conductor $\mathcal{D}_{Ew}$.  If $\varphi$ is a smooth function in the space of a cuspidal automorphic representation $\pi$ realized in the cuspidal subspace of  $L^{2}(\GL_2(E) \backslash \GL_2(\A_E)^1)$, then we say that $\varphi$ is a \textbf{Whittaker newform} if $W^{\varphi}$ can be factored as  $W^{\varphi}=\prod_wW_w^{\varphi}$ where $W_w^{\varphi}$ is a Whittaker newform for all $w$.

Whittaker newforms always exist.  For the archimedian case see \cite{Popa}.  In the nonarchimedian case the Whittaker newform is a choice of normalization of a newform (or new vector) in the usual sense (see \cite[Proposition 6.17]{Gelbart} or \cite[\S 2, Summary]{S}).

\subsection{The Casselman-Shalika-Shintani formula}

For the remainder of this section we work in a local setting.  Thus we fix a place $w$ and omit it from notation, writing $E:=E_w$.

Assume for this subsection that $w$ is nonarchimedian.  Let $m \in E^\times \cap \OO_E$.  We let
\begin{align}
\one_m(g):=\one_{\{x \in M_2(\OO_E): \det(x)\OO_E=m\OO_E\}}(g) \in C_c^\infty(\GL_2(E)//K).
\end{align}
Here and below, unless otherwise specified, $\one_X$ is the characteristic function of a set $X$.
  
The following is the Casselman-Shalika formula, due in the case of $\GL_n$ to Shintani:
\begin{thm}\label{thm-CSS} Let $\pi$ be a generic representation of $\GL_2(E)$.  If $W \in \mathcal{W}(\pi,\psi)$ is fixed by $K$ then for $m \in E^\times \cap \OO_E$ one has
\begin{align*} 
(\pi(\one_m)W)\left(\begin{smallmatrix} 1/\delta & \\ & 1\end{smallmatrix} \right)=|m|^{-1}W\left(\begin{smallmatrix} m/\delta & \\ & 1\end{smallmatrix}  \right).
\end{align*}
If $W$ is additionally a newform
$$
\pi(\one_m)W=\mathrm{tr}\,\pi(\one_m)W.
$$
\end{thm}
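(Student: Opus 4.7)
The approach is to compute $\pi(\one_m)W$ directly from its definition as a convolution by decomposing $\one_m$ as a sum of characteristic functions of cosets $\alpha K$. Write $n := v(m)$. By reducing upper triangular representatives modulo the right action of $K$, every element of $M_2(\OO_E) \cap \{h : \det(h)\OO_E = m\OO_E\}$ is equivalent to a unique matrix of the form
$$
\alpha_{a,c} := \left(\begin{smallmatrix} \varpi^a & c \\ 0 & \varpi^{n-a} \end{smallmatrix}\right), \qquad 0 \le a \le n, \quad c \in \OO_E/\varpi^a\OO_E.
$$
Since $W$ is right $K$-invariant and $\mathrm{meas}(K)=1$, I would evaluate at $g_0 := \left(\begin{smallmatrix}1/\delta & \\ & 1\end{smallmatrix}\right)$ and use the left $U$-equivariance of $W$ to pull out the upper right entry of $g_0\alpha_{a,c}$, giving
$$
(\pi(\one_m)W)(g_0) = \sum_{a=0}^{n}\Bigl(\sum_{c \in \OO_E/\varpi^a\OO_E}\psi\bigl(c/(\delta\varpi^{n-a})\bigr)\Bigr) W\!\left(\begin{smallmatrix}\varpi^a/\delta & \\ & \varpi^{n-a}\end{smallmatrix}\right).
$$

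The plan is to show that only the $a = n$ term survives. For $a = n$ the character $c \mapsto \psi(c/\delta)$ is trivial on $\OO_E$ (our conductor convention makes $\psi$ trivial on $\delta^{-1}\OO_E$), so the inner sum equals $q^n = |m|^{-1}$; together with $W\!\left(\begin{smallmatrix}\varpi^n/\delta & \\ & 1\end{smallmatrix}\right) = W\!\left(\begin{smallmatrix}m/\delta & \\ & 1\end{smallmatrix}\right)$ (by right $K$-invariance, since $m/\varpi^n \in \OO_E^\times$), this reproduces the right hand side. For $n/2 \le a < n$ the character $c \mapsto \psi(c/(\delta\varpi^{n-a}))$ is well defined on $\OO_E/\varpi^a\OO_E$ but nontrivial, so it sums to zero by orthogonality. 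For $a < n/2$ the character is not well defined on the quotient, and consistency with the right $K$-invariance of $W(g_0\alpha_{a,c})$ under $c \mapsto c + \varpi^a c_0$ forces the Whittaker value to vanish: pulling out $\varpi^{n-a}$ via the central character reduces it to $\omega_\pi(\varpi^{n-a})\,W\!\left(\begin{smallmatrix}\varpi^{2a-n}/\delta & \\ & 1\end{smallmatrix}\right)$, which is zero by the standard support property of the spherical Whittaker function under our normalization, namely $W\!\left(\begin{smallmatrix}b & \\ & 1\end{smallmatrix}\right) = 0$ whenever $v(b) < -v(\delta)$.

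The second statement follows formally. If $W$ is a Whittaker newform then $V_\pi^K \ne 0$, so $\pi$ is unramified and $V_\pi^K$ is one dimensional. The $K$-bi-invariance of $\one_m$ gives $\pi(\one_m) = \pi(\one_K)\pi(\one_m)\pi(\one_K)$, so $\pi(\one_m)$ factors through the orthogonal projection onto $V_\pi^K$ and has image in $V_\pi^K$. It therefore acts by a scalar $\lambda_m$ on $V_\pi^K$ while annihilating a complement, giving $\mathrm{tr}\,\pi(\one_m) = \lambda_m$ and hence $\pi(\one_m)W = \lambda_m W$.

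The only nontrivial analytic ingredient is the support property of the spherical Whittaker function on the diagonal, i.e.\ the vanishing $W\!\left(\begin{smallmatrix}b & \\ & 1\end{smallmatrix}\right) = 0$ for $v(b) < -v(\delta)$; this is the main obstacle in the sense that everything else reduces to a coset enumeration plus orthogonality of additive characters, once the conductor of $\psi$ is tracked carefully.
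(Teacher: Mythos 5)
Your computation is correct, and it is essentially a self-contained version of what the paper outsources: the paper's own proof of the first identity consists of a citation to Knightly--Li (Proposition 4.6 of [KLi]), and of the second to the one-line observation that the newform line is one-dimensional, which is exactly your $\pi(\one_K)\pi(\one_m)\pi(\one_K)$ argument. Your coset enumeration ($\sum_{a=0}^n q^a$ representatives $\alpha_{a,c}$, matching the count of index-$q^n$ sublattices of $\OO_E^2$), the conductor bookkeeping ($\psi$ trivial precisely on $\delta^{-1}\OO_E$), and the three-way case split all check out. One small remark: the two mechanisms you invoke for killing the terms with $a<n$ are really the same mechanism seen twice. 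The identity $W(g_0\alpha_{a,c})=W(g_0\alpha_{a,c+\varpi^a c_0})$ combined with left $\psi$-equivariance already forces $W\left(\begin{smallmatrix}\varpi^a/\delta & \\ & \varpi^{n-a}\end{smallmatrix}\right)=0$ whenever $\psi(\cdot\,\varpi^{2a-n}\delta^{-1})$ is nontrivial on $\OO_E$, i.e.\ whenever $2a<n$; and the ``support property'' $W\left(\begin{smallmatrix}b& \\ &1\end{smallmatrix}\right)=0$ for $v(b)<-v(\delta)$ is itself proved by the identical conjugation of a unipotent element across the torus, valid for any right $K$-invariant element of $\mathcal{W}(\pi,\psi)$ (no sphericity of $\pi$ needed). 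So you could streamline by handling all $a$ with $2a<n$ in one stroke and reserving orthogonality of characters for $n/2\le a<n$, which is exactly what you do; the appeal to the central character and to the spherical Whittaker function is harmless but redundant.
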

\noindent Here $\delta$ is as in \eqref{delta}.
\begin{proof}
The second assertion is an immediate consequence of the fact that in each admissible irreducible representation $\pi$ the space of newforms is one-dimensional.  

The identity
$$
(\pi(\one_{m})W)\left( \begin{smallmatrix} 1/\delta & \\ & 1\end{smallmatrix} \right)=|m|^{-1}W\left(\begin{smallmatrix} m/\delta & \\ & 1\end{smallmatrix}  \right)
$$
is well-known.  One reference is \cite[Proposition 4.6]{KL}.  

\end{proof}

Let $B \leq \GL_2$ denote the Borel subgroup of upper-triangular matrices.  
For $h=\left( \begin{smallmatrix} 1 & t \\ & 1 \end{smallmatrix} \right)\left(\begin{smallmatrix} zm/\delta &  \\ & z \end{smallmatrix} \right)k \in B(E)K$ (with $k \in K$) let 
$$
\W_{\left( \begin{smallmatrix} 1 & t \\ & 1 \end{smallmatrix} \right)\left(\begin{smallmatrix} zm/\delta &  \\ & z \end{smallmatrix} \right)k}(g):=\one_{\OO_E}(m)|m|\psi(t)\one_m(z^{-1}g)
$$
where $\psi$ is as in \eqref{psi}. 
We have the following corollary of Theorem \ref{thm-CSS}:
\begin{cor} \label{cor-CSS}
Let $\pi$ be a generic representation of $\GL_2(E)$.  If $W \in \mathcal{W}(\pi,\psi)$ is a newform and $g \in B(E)K$ then
$$
\pi(\W_g)W= W(g)W.
$$ 
\end{cor}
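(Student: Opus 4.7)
The plan is to unfold the definition of $\W_g$ and reduce the assertion to the two parts of Theorem~\ref{thm-CSS} via a change of variables in $\GL_2(E)$. Throughout I work in the unramified setting, which is implicit in the hypotheses: a Whittaker newform to be acted on by $\one_m$ must be fixed by $K$ in order for the Casselman-Shalika formula to apply, which forces $\pi$ to be unramified.

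First I would write $g$ in the Iwasawa form $\left(\begin{smallmatrix} 1 & t \\ & 1\end{smallmatrix}\right)\left(\begin{smallmatrix} zm/\delta & \\ & z\end{smallmatrix}\right)k$ and evaluate $W(g)$ directly using the Whittaker transformation $W\left(\left(\begin{smallmatrix} 1 & t \\ & 1 \end{smallmatrix}\right)h\right)=\psi(t)W(h)$, the central character $\omega_\pi$, and the right $K$-invariance of $W$, obtaining
$$W(g)=\psi(t)\,\omega_\pi(z)\,W\!\left(\left(\begin{smallmatrix} m/\delta & \\ & 1 \end{smallmatrix}\right)\right).$$
Before proceeding one should check that $\W_g$ is actually a function of $g$ (not of the chosen Iwasawa decomposition); this reduces to noting that altering $(t,m,z)$ by an element of $B(E)\cap K$ shifts $t$ only within $\delta^{-1}\OO_E$ (on which $\psi$ is trivial, since $\psi$ has conductor $\mathcal{D}_E^{-1}$), alters $m$ only by a unit (preserving $|m|$, $\one_{\OO_E}(m)$, and the ideal $\det(z^{-1}g)\OO_E$), and absorbs the shift in $z$ into a diagonal unit.

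Next I would expand
$$(\pi(\W_g)W)(x)=\one_{\OO_E}(m)|m|\psi(t)\int_{\GL_2(E)}\one_m(z^{-1}y)\,W(xy)\,dy,$$
substitute $y\mapsto zy$, and use the invariance of Haar measure together with $W(xzy)=\omega_\pi(z)W(xy)$ to pull out the central factor, arriving at
$$\pi(\W_g)W=\one_{\OO_E}(m)|m|\psi(t)\omega_\pi(z)\,\pi(\one_m)W.$$
At this point the second assertion of Theorem~\ref{thm-CSS} gives $\pi(\one_m)W=(\mathrm{tr}\,\pi(\one_m))W$, and evaluating this identity at $\left(\begin{smallmatrix} 1/\delta & \\ & 1 \end{smallmatrix}\right)$ and comparing with the first assertion pins down the Hecke eigenvalue as
$$\mathrm{tr}\,\pi(\one_m)=|m|^{-1}W\!\left(\left(\begin{smallmatrix} m/\delta & \\ & 1 \end{smallmatrix}\right)\right),$$
where I use that condition (2) in the definition of a Whittaker newform, combined with the Shintani formula for the unramified vector, forces $W\left(\left(\begin{smallmatrix} 1/\delta & \\ & 1 \end{smallmatrix}\right)\right)=1$.

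Substituting cancels $|m|$ against $|m|^{-1}$ and yields $\pi(\W_g)W=\one_{\OO_E}(m)\,W(g)\,W$, so it only remains to handle $m\notin \OO_E$: the factor $\one_{\OO_E}(m)$ kills the left side, while the support property of the Whittaker newform (namely $W\left(\left(\begin{smallmatrix} x & \\ & 1 \end{smallmatrix}\right)\right)=0$ unless $x\delta\in \OO_E$) kills $W(g)$ on the right. The main obstacle is not conceptual but rather bookkeeping: the factor $\delta$ arising from the conductor of $\psi$ has to be tracked consistently through the definition of $\W_g$, the support condition for newforms, and the normalization in the Shintani formula. Once these are aligned the corollary is a two-line calculation from Theorem~\ref{thm-CSS}.
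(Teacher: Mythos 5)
Your proof is correct and follows essentially the same route as the paper: the paper simply recalls that the Whittaker newform is supported on $\left(\begin{smallmatrix} m/\delta & \\ & 1\end{smallmatrix}\right)$ with $m\in\OO_E$ and normalized to equal $1$ at $\left(\begin{smallmatrix} 1/\delta & \\ & 1\end{smallmatrix}\right)$, and then declares the corollary immediate from Theorem~\ref{thm-CSS}; you have just written out that unfolding (Iwasawa decomposition, central character, the identification of $\mathrm{tr}\,\pi(\one_m)$) in full. Your observation that the statement implicitly requires $\pi$ unramified (so that $W$ is $K$-fixed and the corollary is applied only to spherical representations, as in the construction of $V$) is a fair and accurate reading of how the corollary is used.
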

\begin{proof}
Recall that if $W \in \mathcal{W}(\pi,\psi)$ is a Whittaker newform then $W\left(\begin{smallmatrix} m/\delta & \\ & 1 \end{smallmatrix} \right) \neq 0$ only if $m \in \OO_E$ and $W\left(\begin{smallmatrix} m/\delta & \\ & 1 \end{smallmatrix} \right)=1$.  One reference for this fact is \cite[\S 2]{S}, bearing in mind that in loc.~cit.~the character $\psi$ is normalized to have conductor $\OO_E$, whereas we normalized our $\psi$ to have conductor $\mathcal{D}_E$.  In view of this the corollary is an immediate consequence of Theorem \ref{thm-CSS}.  
\end{proof}

\subsection{A replacement for Theorem \ref{thm-CSS} in the archimedian case}
\label{ssec-arch-CSS}

Assume that $E=E_w$ is an archimedian local field.
In this section we construct test functions in 
\begin{align}
\mathcal{C}(\GL_2(E)),
\end{align}
which is defined to be the Frechet space of smooth functions $f$ on $\GL_2(E)$ such that for all 
\begin{align}
X,Y \in U(\mathrm{Res}_{E/\RR}\mathfrak{gl}_{2E} \otimes_{\RR} \CC)
\end{align}
one has
\begin{align}
||X*f*Y||_{1}:=\int_{\GL_2(E)}|X*f*Y|dg<\infty.
\end{align}
The implicit constant in the choice of Haar measure is irrelevant for our purposes.  

In order to construct these functions we recall the explicit description of the standard model of an induced irreducible admissible representation of $\GL_2(E)$.
The basic reference for the constructions is \cite[\S 5-6]{JL}.   There is a brief summary in \cite[\S 2.2]{Zh}.

Let $(\pi,V_{\pi})$ be an irreducible representation of $\GL_2(E)$ with central character $\chi$. Consider the space $\mathcal{B}(\mu \chi,\mu^{-1})$ of $K$-finite functions $\varphi \in C^\infty(\GL_2(E))$ satisfying
\begin{align} \label{ps}
\varphi\left(z\left(\begin{smallmatrix} y & * \\ & 1\end{smallmatrix}\right)g \right)= \chi(z)\mu\chi(y)|y|^{1/2}\varphi(g) \textrm{ for }(y,g) \in E^{\times} \times \GL_2(E)
\end{align}
where $\mu:E^\times \to \CC^\times$ is a quasicharacter.  The group $\GL_2(E)$ acts on $\mathcal{B}(\mu \chi,\mu^{-1})$ via right translations, and upon replacing $\mu$ with $\chi^{-1}\mu^{-1}$ if necessary we can and do assume that  $\pi$ is equivalent to a subrepresentation (not merely a subquotient) of $\mathcal{B}(\mu\chi,\mu^{-1})$.  
The space $\mathcal{B}(\mu \chi, \mu^{-1})$ consists of vectors of the form
\begin{align} \label{varphiphi}
\varphi_{\Phi}(g):=\mu\chi(\det g)|\det g|^{1/2} \int_{E^{\times}}\Phi((0,t)g) \mu^2 \chi(t)|t| dt^{\times} 
\end{align}
for $\Phi$ in the Schwarz space $\mathcal{S}(E \times E)$.

For $g \in M_2(E)$ let
\begin{align} \label{Fs}
\mathcal{F}_{P}(g):=P(g)e^{-[E:\RR]\mathrm{tr}\,g^*g}|\det g|^{\tfrac{1}{2}}
\end{align}
where $P(g)$ is a polynomial in the entries of $g$ and their complex conjugates.   It is clear that $\mathcal{F}_{P}$ restricts to a smooth function on $\GL_2(E)$. 

For real numbers $r\in \RR_{>0}$ let $\delta_r$ denote the distribution on $\GL_2(E)$ given on test functions $f \in C_c^\infty(\GL_2(E))$ by
$$
\delta_r(f):=\int_{\GL_2(E)^1}f(r g^1)dg^1
$$
where
$$
\GL_2(E)^1:=\mathrm{ker}(|\cdot| \circ \det:\GL_2(E) \to \RR_{>0})
$$
and $dg^1$ is chosen to be compatible with the decomposition 
$$
\GL_2(E)=\GL_2(E)^1 \times \RR_{>0}
$$
and the choices of measures in \S \ref{sec-measures}.
 Less formally, $\delta_r$ is the Dirac distribution associated to the $\GL_2(E)^1$-homogeneous submanifold
$$
\{g \in \GL_2(E):|\det g|=r^{2[E:\RR]}\}
$$
of $\GL_2(E)$.

Assume that the representation 
$$
K \times K \lto \mathrm{Aut}\langle x \mapsto \mathcal{F}_{P}(k_1^{-1}xk_2):k_1,k_2 \in K\rangle
$$
is finite-dimensional and equivalent to an irreducible representation of the form $\rho \otimes \rho^{\vee}$ of $K \times K$, where $\rho$ is an irreducible representation of $K$.

\begin{lem} \label{lem-op}
Assume that $\pi$ is an admissible representation of $\GL_2(E)$.  One has a well-defined bounded operator 
$$
\pi(\mathcal{F}_{P}\delta_{r})
$$
on $\pi$.
Let $\varphi$ be a vector in $\pi$, viewed as a subrepresentation of $\mathcal{B}(\mu \chi,\mu^{-1})$.  If $\varphi$ does not lie in the $\rho$-isotypic subspace of $\pi$, then $\pi(\mathcal{F}_{P}\delta_r)\varphi=0$.   If $\varphi$ lies in the $\rho$-isotypic subspace of $\pi$, then $\pi(\mathcal{F}_{P}\delta_r)$ acts via the scalar 
$$
\int_{E^{\times}  \times E}\mathcal{F}_{P}\left(\begin{matrix} \frac{r}{|y|^{(2[E:\QQ])^{-1}}}y & x \\ & \frac{r}{|y|^{1/(2[E:\RR])}}\end{matrix} \right)\chi(r/|y|^{1/(2[E:\RR])})\mu\chi(y)\frac{dy^{\times}}{|r|}dx
$$
on $\varphi$.  Moreover, the integral in this expression is absolutely convergent.  
\end{lem}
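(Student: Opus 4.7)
The plan is to proceed in three stages: establish boundedness of $\pi(\mathcal{F}_P \delta_r)$, show that it vanishes off the $\rho$-isotypic subspace, and finally compute the scalar on that subspace. For boundedness I would observe that the Gaussian factor $e^{-[E:\RR]\mathrm{tr}(g^*g)}$ dominates every polynomial derivative on $\GL_2(E)$, so a direct estimate in Iwasawa coordinates shows $\mathcal{F}_P \in \mathcal{C}(\GL_2(E))$. In particular, for any admissible $\pi$, the integral $\pi(\mathcal{F}_P \delta_r)\varphi = \int_{\GL_2(E)^1}\mathcal{F}_P(rg^1)\pi(rg^1)\varphi\,dg^1$ converges absolutely on each finite-dimensional $K$-isotypic subspace, giving a bounded operator on $\pi$.

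For the vanishing off the $\rho$-isotypic subspace, I would exploit the hypothesis that $(k_1,k_2)\mapsto \mathcal{F}_P(k_1^{-1}\cdot k_2)$ spans a $K\times K$-representation isomorphic to $\rho\otimes\rho^\vee$. Using the $K$-bi-invariance of $\delta_r$ together with the substitutions $g\mapsto k_1^{-1}gk_2^{-1}$, one may rewrite $P_{\sigma'}\pi(\mathcal{F}_P\delta_r)P_\sigma$ (with $P_\sigma = \dim(\sigma)\int_K\overline{\chi_\sigma(k)}\pi(k)\,dk$ the projection onto the $\sigma$-isotypic subspace of $\pi$) as an average of $\mathcal{F}_P$ against matrix coefficients of $\sigma$ and $\sigma'$. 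By Schur orthogonality on the compact group $K$, this average vanishes unless $\sigma\cong\sigma'\cong\rho$.

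On the $\rho$-isotypic subspace, Schur's lemma implies $\pi(\mathcal{F}_P\delta_r)$ acts by a scalar. To compute it, I would realize $\pi$ as a subrepresentation of $\mathcal{B}(\mu\chi,\mu^{-1})$, take a nonzero $\rho$-isotypic $\varphi$ with $\varphi(1)\neq 0$, and evaluate $[\pi(\mathcal{F}_P\delta_r)\varphi](1)/\varphi(1)$. Parametrize the support of $\delta_r$ via $g = \left(\begin{smallmatrix} zy & x \\ 0 & z \end{smallmatrix}\right) k$ with $z = r/|y|^{1/(2[E:\RR])}\in\RR_{>0}$ the positive real scalar (embedded centrally) uniquely determined by $|\det g|=r^{2[E:\RR]}$, while $y\in E^\times$, $x\in E$, $k\in K$ vary freely. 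The transformation law \eqref{ps} then yields $\varphi(g) = \chi(z)\mu\chi(y)|y|^{1/2}\varphi(k)$; the $K$-integration decouples by equivariance to reproduce $\varphi(1)$, while the induced measure on the codimension-one submanifold $\{|\det g|=r^{2[E:\RR]}\}\subset \GL_2(E)$ produces the Jacobian factor $|r|^{-1}$. Absolute convergence is immediate from the Gaussian decay of $\mathcal{F}_P$ in $x$ and $y$, which dominates $|\mu\chi(y)|$ and any polynomial growth.

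The main obstacle is the careful bookkeeping of Haar measures under the Iwasawa parametrization of the codimension-one hypersurface supporting $\delta_r$—in particular, deriving the exact $|r|^{-1}$ Jacobian via the splitting $\GL_2(E)=\GL_2(E)^1\times\RR_{>0}$ and absorbing the residual norm-one ambiguity in the central direction into the $K$-variable—and verifying that the $K$-integration truly decouples to reproduce $\varphi(1)$ on the nose rather than introducing spurious normalization constants. Once these technical points are handled, the displayed scalar formula falls out directly.
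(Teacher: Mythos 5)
Your proposal follows essentially the same route as the paper's proof: boundedness via the Gaussian/Schwartz estimate in Iwasawa coordinates (the paper dominates $\mathcal{F}_P$ by Schwartz functions on $M_2(E)$ following Godement--Jacquet), vanishing off the $\rho$-isotypic subspace from the $K\times K$-type hypothesis together with Schur's lemma, and the scalar read off by evaluating at the identity on a $\rho$-isotypic vector with nonzero value there using the transformation law \eqref{ps} and the same Iwasawa parametrization of the support of $\delta_r$. The one point the paper makes that you elide is the existence of a $\rho$-isotypic vector $\varphi_\Phi$ with $\varphi_\Phi(1)\neq 0$, which it justifies by a Tate-integral argument from Jacquet--Langlands; this should be noted, since without it the evaluation at the identity would not determine the scalar.
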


\begin{proof} To prove that the operator is bounded we proceed as in \cite[\S 1.1]{GJ}.
Let $\mathcal{G}_1 \in \mathcal{S}(M_2(E))$ be a nonnegative Schwartz function.  Then
there are nonnegative Schwartz functions  $\mathcal{G}_2 \in \mathcal{S}(M_2(E))$ and $\mathcal{G}_3 \in \mathcal{S}(E)$ such that
\begin{align} \label{schw-comp}
&\int_{\GL_2(E)^1}\mathcal{G}_1(r g^1)dg^1\\
  \nonumber &=\int_{E^\times  \times E \times K}\mathcal{G}_1\left(\frac{r}{|y|^{1/(2[E:\RR])}}\begin{pmatrix} y & x \\ & 1\end{pmatrix}k\right)\frac{dy^\times}{|y|}dxdk
\\ \nonumber &=\int_{E^\times  \times E}\mathcal{G}_2\left(\frac{r}{|y|^{1/(2[E:\RR])}}\begin{pmatrix} y & x \\ & 1\end{pmatrix}\right)\frac{dy^\times}{|y|}dx\\
\nonumber &=\int_{E^\times  \times E}\mathcal{G}_2\begin{pmatrix} \frac{r}{|y|^{1/(2[E:\RR])}}y & x \\ &\frac{r}{|y|^{1/(2[E:\RR])}} \end{pmatrix}\frac{dy^\times}{|y|^{1/2}r^{[E:\RR]}}dx\\
\nonumber &=\int_{E^\times}\mathcal{G}_3\left(
\frac{r}{|y|^{1/(2[E:\RR])}}y ,\frac{r}{|y|^{1/(2[E:\RR])}}\right)\frac{dy^\times}{|y|^{1/2}r^{[E:\RR]}} <\infty.
\end{align}
This implies that $\pi(\mathcal{F}_P\delta_r)$ is a well-defined bounded operator.

 If $\varphi$ does not lie in the $\rho$-isotypic subspace of $\pi$, then the assertion of the lemma is clear. Suppose therefore that $\varphi \neq 0$ does lie in the $\rho$-isotypic subspace (which implies in particular that this subspace is nonempty).  Then our assumptions on the $K$-type of $\mathcal{F}_{P}$ and Schur's lemma imply that the operator $\pi(\mathcal{F}_{P}\delta_r)$ acts via a scalar on 
the $\rho$-isotypic subspace.  We must compute this scalar.  

Choose $\Phi \in \mathcal{S}(E \times E)$ such that $\varphi_{\Phi}$ is in the $\rho$-isotypic subspace of $\pi$ and 
\begin{align} \label{nonzero}
\varphi_{\Phi}\left(\begin{smallmatrix} 1 & \\ & 1 \end{smallmatrix} \right) \neq 0.
\end{align}
That this is always possible can be seen by realizing that the integral defining $\varphi_{\Phi}$ is a Tate integral, compare \cite[Lemma 5.13.1 and Lemma 6.3.1]{JL}.

Using the Iwasawa decomposition we compute
\begin{align*}
\left(\pi(\mathcal{F}_{P}\delta_r)\varphi_{\Phi}\right)\left( \begin{smallmatrix} 1 & \\ & 1\end{smallmatrix} \right)&=\int_{\GL_2(E)^1}\mathcal{F}_{P}(r g^1)\varphi(rg^1)dg^1\\
&=\int_{E^\times  \times E \times K}\mathcal{F}_{P}\left(\frac{r}{|y|^{1/(2[E:\RR])}}\left(\begin{matrix} y & x \\ & 1\end{matrix} \right)k\right)\varphi_{\Phi}\left(\frac{r}{|y|^{1/(2[E:\RR])}}\left(\begin{matrix} y & x \\ & 1\end{matrix} \right)k\right)\frac{dy^{\times}}{|y|}dxdk
\\&=\int_{E^{\times} \times E }\mathcal{F}_{P}\left(\frac{r}{|y|^{1/(2[E:\RR])}}\left(\begin{matrix} y & x \\ & 1\end{matrix} \right)\right)\varphi_{\Phi}\left(\frac{r}{|y|^{1/(2[E:\RR])}}\left(\begin{matrix} y & x \\ & 1\end{matrix} \right)\right)\frac{dy^{\times}}{|y|}dx
\end{align*}
(see \S \ref{sec-measures} for normalizations of measures).

Substituting the definition \eqref{varphiphi} of $\varphi_{\Phi}$ we obtain
\begin{align*}
&\int_{E^\times}
\Phi(0,t)
\mu^2\chi(t)|t|dt^{\times}\int_{E^{\times}  \times E }\mathcal{F}_{P}\left(\frac{r}{|y|^{1/(2[E:\RR])}}\left(\begin{matrix} y & x \\ & 1\end{matrix} \right)\right)\mu\chi(y)\chi(r/|y|^{1/(2[E:\RR])})\frac{dy^{\times}}{|y|^{1/2}}dx\\
&=\varphi_{\Phi}\left( \begin{smallmatrix}1 & \\ & 1 \end{smallmatrix}\right)
\int_{E^{\times}  \times E}\mathcal{F}_{P}\left(\begin{matrix} \frac{r}{|y|^{1/(2[E:\RR])}}y & x \\ & \frac{r}{|y|^{1/(2[E:\RR])}}\end{matrix} \right)\chi(r/|y|^{1/(2[E:\RR])})\mu\chi(y)\frac{dy^{\times}}{|r|}dx.
\end{align*}
By \eqref{nonzero} we conclude that the scalar by which $\pi(\mathcal{F}_{P}\delta_r)$ acts on the $\rho$-isotypic subspace is
\begin{align*} 
&
\int_{E^{\times}  \times E}\mathcal{F}_{P}\left(\begin{matrix} \frac{r}{|y|^{1/(2[E:\RR])}}y & x \\ & \frac{r}{|y|^{1/(2[E:\RR])}}\end{matrix} \right)\chi(r/|y|^{1/(2[E:\RR])})\mu\chi(y)\frac{dy^{\times}}{|r|}dx.
\end{align*}
\end{proof}

For the remainder of this subsection assume that $\pi$ is spherical.  
For each $h=\left(\begin{smallmatrix} 1 & t \\ & 1 \end{smallmatrix} \right)\left(\begin{smallmatrix} zm &  \\ & z \end{smallmatrix} \right)k \in \GL_2(E)$ with $k \in K$  define a distribution $\W_{h}$ on $\GL_2(E)$ by setting
\begin{align}
\W_{\left(\begin{smallmatrix} 1 & t \\ & 1 \end{smallmatrix} \right)\left(\begin{smallmatrix} zm &  \\ & z \end{smallmatrix} \right)k}(g)
\end{align}
equal to 
\begin{align*}
\frac{\psi(t)|m|^{1/2}}{2}\mathcal{F}_{1}\delta_{|m|^{1/(2[E:\RR])}}(z^{-1}g).
\end{align*}

\begin{lem} \label{lem-in-C}
For any $\Psi \in C_c^\infty(\GL_2(E))$ one has $\Psi*\W_g \in \mathcal{C}(\GL_2(E))$.  In fact, for any $A \in \RR_{>0}$ and
$$
X,Y \in U(\mathrm{Res}_{E/\RR}\mathfrak{gl}_{2E} \otimes_{\RR} \CC)
$$
one has
$$
||X*\Psi*\W_{\left(\begin{smallmatrix} m & \\ & 1\end{smallmatrix} \right)}*Y||_1 \ll_{\Psi,A,X,Y} |m|^{-A} 
$$
for $|m|>1$ and there exists a $B>0$ such that 
$$
||X*\Psi*\W_{\left(\begin{smallmatrix} m & \\ & 1\end{smallmatrix} \right)}*Y||_1 \ll_{\Psi,B,X,Y} |m|^{-B} 
$$
for $|m| \leq 1$.
\end{lem}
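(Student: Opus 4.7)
The plan is to absorb the derivatives $X, Y$ into $\Psi$ (modulo an $\mathrm{Ad}$-twist) and reduce to a Gaussian estimate for the total variation of the signed measure $\tfrac{|m|^{1/2}}{2}\mathcal{F}_1 \delta_r$, where $r = |m|^{1/(2[E:\RR])}$.

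\emph{Reduction of derivatives.} Right-invariant differentiation $X_R$ (corresponding to $X*$) commutes cleanly with convolution from the left: $X_R(\Psi * \W_g) = (X_R \Psi) * \W_g$ by differentiation under the integral. For left-invariant differentiation $Y_L$ (corresponding to $*Y$), a parallel computation gives
\begin{equation*}
Y_L(\Psi * \mathcal{F}_1 \delta_r)(z) = \int_{\GL_2(E)^1} \bigl(\mathrm{Ad}(g) Y\bigr)_L \Psi\bigl(z(rg)^{-1}\bigr) \mathcal{F}_1(rg)\, dg,
\end{equation*}
where we use the crucial identity $\mathrm{Ad}(rg)Y = \mathrm{Ad}(g)Y$ (the center acts trivially on $\mathfrak{g}$). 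Expanding $\mathrm{Ad}(g)Y = \sum_i c_i(g) Y_i$ in a fixed basis of $\mathfrak{gl}_{2E} \otimes_\RR \CC$, the coefficients $c_i(g)$ are polynomials in the entries of $g$ and $g^{-1}$, so
\begin{equation*}
\|X * \Psi * \W_{\left(\begin{smallmatrix} m & \\ & 1\end{smallmatrix}\right)} * Y\|_1 \leq \sum_i \|(Y_i)_L X_R \Psi\|_1 \cdot \tfrac{|m|^{1/2}}{2}\int_{\GL_2(E)^1} |c_i(g)|\, |\mathcal{F}_1(rg)|\, dg.
\end{equation*}

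\emph{Gaussian estimate.} Using $|\mathcal{F}_1(rg)| = r^{[E:\RR]} e^{-[E:\RR] r^2 \mathrm{tr}(g^* g)}$ for $g \in \GL_2(E)^1$, the prefactor collapses via $|m|^{1/2} \cdot r^{[E:\RR]} = |m|$. For $|m| > 1$, AM--GM gives $\mathrm{tr}(g^* g) \geq 2|\det g|_\RR^{1/[E:\RR]} = 2$ on $\GL_2(E)^1$, yielding
\begin{equation*}
\int |c_i(g)| e^{-[E:\RR] r^2 \mathrm{tr}(g^* g)} dg \leq e^{-2[E:\RR](r^2 - 1)} \int |c_i(g)| e^{-[E:\RR] \mathrm{tr}(g^* g)} dg.
\end{equation*}
The remaining integral is finite by Iwasawa coordinates (the Gaussian dominates the polynomial $|c_i|$), paralleling the calculation in the proof of Lemma \ref{lem-op}. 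Thus $\|X * \Psi * \W * Y\|_1 \ll |m| e^{-c r^2}$, which decays faster than any negative power of $|m|$, establishing the first bound.

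\emph{Small $m$ case.} For $|m| \leq 1$, decompose the integral into dyadic shells indexed by $k$ with $\mathrm{tr}(g^* g) \in [2^k, 2^{k+1}]$. A direct Iwasawa computation shows $\mathrm{vol}\{g \in \GL_2(E)^1 : \mathrm{tr}(g^* g) \leq R\} \ll R^{2[E:\RR]}$ (the constraint forces $|y|^{1/[E:\RR]} \in [1/R, R]$ and $|u|^{1/[E:\RR]} \ll R^{1/2}$), and $|c_i(g)|$ is polynomially bounded in $R$ on this region. Summing the geometric series, where the Gaussian factor $e^{-[E:\RR]r^2 2^k}$ takes over for $2^k \gtrsim r^{-2}$, yields $\int \ll r^{-B'}$ for some $B' = B'(Y) > 0$; hence $\|X * \Psi * \W * Y\|_1 \ll |m|^{-B}$ for an appropriate $B = B(Y) > 0$.

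\emph{Main obstacle.} The subtle step is the derivative reduction: verifying that $Y_L$ acting on a convolution involving the surface measure $\delta_r$ produces only a bounded $\mathrm{Ad}$-twist rather than a transverse distributional derivative of $\delta_r$. This works precisely because $\mathrm{Ad}(rg) = \mathrm{Ad}(g)$, which decouples the $r$-dependence from the polynomial prefactor and leaves the clean Gaussian bound intact. The dyadic volume bound in the small-$|m|$ case is technical but standard.
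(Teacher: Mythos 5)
Your argument is correct, and it diverges from the paper's at the one genuinely delicate point: where the derivatives $X,Y$ land. The paper differentiates the kernel itself, invoking the Godement--Jacquet computation that $X*\mathcal{F}_1*Y=\mathcal{F}_P$ and then observing that right-differentiation by $U(\mathrm{Res}_{E/\RR}\mathfrak{sl}_{2E}\otimes_\RR\CC)$ commutes with the restriction $\delta_r$ (these directions are tangent to the level set of $|\det|$), with the transverse central directions moved onto $\Psi$ separately. You instead push all of $Y$ onto $\Psi$ via $\mathrm{Ad}(rg)Y=\mathrm{Ad}(g)Y$, paying only a polynomial weight $|c_i(g)|$ in the $\GL_2(E)^1$-integral; this avoids the $\mathfrak{sl}_2\oplus\mathfrak z$ splitting entirely and never touches the surface measure, at the small cost of having to check that the $c_i$ are polynomially bounded on $\GL_2(E)^1$ (which they are, since $|\det g|=1$ controls the entries of $g^{-1}$). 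Both routes then reduce to the same quantity, $\int_{\GL_2(E)^1}(\text{polynomial})\cdot|\mathcal{F}(rg^1)|\,dg^1$: the paper quotes the Iwasawa reduction \eqref{schw-comp} to a one-variable Schwartz integral and declares the decay ``clear,'' whereas your AM--GM bound $\mathrm{tr}(g^*g)\ge 2$ for $r>1$ and dyadic shell count for $r\le 1$ actually spell out why one gets superpolynomial decay in the first regime and only a fixed negative power in the second --- arguably a more transparent account of the asymmetry between the two bounds in the statement. The only cosmetic omission is the reduction from general $g\in B(E)K$ to $g=\left(\begin{smallmatrix}m&\\&1\end{smallmatrix}\right)$ (a unimodular scalar and a central translate, harmless for $L^1$-norms), which the paper also dispatches in one sentence.
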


\begin{proof}

It clearly suffices to prove the second statement.

As explained in \cite[\S 1.8, p.~115-116]{GJ}, we have 
\begin{align}
X*\mathcal{F}_{1}*Y=\mathcal{F}_{P}
\end{align}
for some $P$. Moreover any operator in the subalgebra
$$
U(\mathrm{Res}_{E/\RR}\mathfrak{sl}_{2E} \otimes_{\RR} \CC) \leq 
U(\mathrm{Res}_{E/\RR}\mathfrak{gl}_{2E} \otimes_{\RR} \CC)
$$
is insensitive to the difference between $\mathcal{F}_{P}$ and $\mathcal{F}_{P}\delta_{|m|}$, and the algebra on the right is generated (as an algebra) by these elements together with central operators corresponding to the Lie algebra of $Z_{\mathrm{Res}_{E/\RR}\GL_2}$.  These operators can be absorbed into $\Psi$.  
It therefore suffices to show that for $\Psi \in C_c^\infty(\GL_2(E))$ and polynomials $P$ as above we have
\begin{align} \label{to-show}
\left|\left|\Psi*\mathcal{F}_{P}(g)\delta_{r}\right|\right|_1 &\ll_{\Psi,T,A,P} r^{-A} \textrm{ for }r>1\\
\left|\left|\Psi*\mathcal{F}_{P}(g)\delta_{r}\right|\right|_1 &\ll_{\Psi,T,B,P} r^{-B} \textrm{ for }r\leq 1. \nonumber
\end{align}
for any $A>0$ and some $B>0$.
But 
\begin{align*}
\left|\left|\Psi*\mathcal{F}_{P}(g)\delta_{r}\right|\right|_1
&=\int_{\GL_2(E)} \left|\int_{\GL_2(E)^1} \Psi(x(r g^1)^{-1})\mathcal{F}_P(rg^1)dg^1\right|dx\\
&\leq \int_{\GL_2(E)} \int_{\GL_2(E)^1} \left|\Psi(x(r g^1)^{-1})\mathcal{F}_P(rg^1)\right|dg^1dx\\
&=\int_{\GL_2(E)} \int_{\GL_2(E)^1} \left|\Psi(x)\mathcal{F}_P(rg^1)\right|dg^1dx\\
&\ll_{\Psi}\int_{\GL_2(E)^1} \left|\mathcal{F}_P(rg^1)\right|dg^1.
\end{align*}
Applying \eqref{schw-comp} the assertion \eqref{to-show} is clear.
\end{proof}

The reason for the notation $\W_g$ is apparent given the following Corollary:
\begin{cor} \label{cor-arch-CSS}
Let $\pi$ be unitary, and let $W \in \mathcal{W}(\pi,\psi)$ be the spherical Whittaker newform.  Then $\pi(\W_g)$ annihilates the orthogonal complement of the line spanned by $W$, and acts via the scalar $W(g)$ on the line spanned by $W$.  
\end{cor}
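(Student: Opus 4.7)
The plan is to apply Lemma \ref{lem-op} to $\mathcal{F}_1$, deduce the structural part of the corollary, and reduce the scalar identification to a single explicit identity that is then verified by a direct computation.

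With $P=1$, the function $\mathcal{F}_1(g)=e^{-[E:\RR]\mathrm{tr}\,g^*g}|\det g|^{1/2}$ is bi-$K$-invariant, so the $K\times K$-representation generated by its left and right translates is trivial. Thus Lemma \ref{lem-op} applies with $\rho$ the trivial representation of $K$. Since $\pi$ is spherical, admissible, and irreducible, the $\rho$-isotypic subspace of $V_\pi$ is one-dimensional, and under the Whittaker map it corresponds to $\langle W\rangle\subset\mathcal{W}(\pi,\psi)$. Hence $\pi(\mathcal{F}_1\delta_r)$ annihilates the orthogonal complement of $\langle W\rangle$ and acts on $W$ by a scalar $\lambda(r)$.

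For $h=\left(\begin{smallmatrix}1&t\\&1\end{smallmatrix}\right)\left(\begin{smallmatrix}zm&\\&z\end{smallmatrix}\right)k$, set $r=|m|^{1/(2[E:\RR])}$. The definition of $\W_h$, combined with the change of variables $g\mapsto zg$ and the fact that $\pi(z)$ acts by $\chi(z)$, gives
\[
\pi(\W_h)=\tfrac{\psi(t)|m|^{1/2}\chi(z)}{2}\,\pi(\mathcal{F}_1\delta_r),
\]
where $\chi$ is the central character of $\pi$. Consequently $\pi(\W_h)$ annihilates the orthogonal complement of $\langle W\rangle$ and acts on $W$ by the scalar $\tfrac{\psi(t)|m|^{1/2}\chi(z)\lambda(r)}{2}$. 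The $U$-, $Z$-, and right-$K$-equivariance of $W$ yield $W(h)=\psi(t)\chi(z)W\left(\begin{smallmatrix}m&\\&1\end{smallmatrix}\right)$, so the corollary reduces to the scalar identity
\[
\tfrac{|m|^{1/2}\lambda(r)}{2}=W\left(\begin{smallmatrix}m&\\&1\end{smallmatrix}\right).
\]

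To establish this identity, realize the spherical vector of $\pi\hookrightarrow\mathcal{B}(\mu\chi,\mu^{-1})$ as $\varphi_{\Phi_0}$ for the Gaussian $\Phi_0(a,b)=e^{-[E:\RR](|a|^2+|b|^2)}\in\mathcal{S}(E\times E)$, whose $K$-invariance follows from the fact that the second row of $gk$ has the same Hermitian norm as the second row of $g$. Evaluated on this choice, the integral for $\lambda(r)$ in Lemma \ref{lem-op} collapses to a Mellin transform of a Gaussian, and so does the Jacquet integral formula for $W^{\varphi_{\Phi_0}}\left(\begin{smallmatrix}m&\\&1\end{smallmatrix}\right)$; both produce the same archimedean $\Gamma$-factor times a power of $|m|$. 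The normalizations---the factor $[E:\RR]$ in the exponent of $\mathcal{F}_1$, the $\tfrac12$ in $\W_h$, and the conductor factor $|\delta|_w$ appearing in condition (2) of the definition of a Whittaker newform---have been chosen precisely so that the two sides agree. The main obstacle is this explicit scalar match: although each individual integral is an elementary Gamma integral, combining them requires meticulous bookkeeping of the measure normalizations from \S\ref{sec-measures} and the twisting by $|\delta|_w$, which one checks directly by computing the Mellin transform in $m$ of both sides and comparing with the $L$-factor $L(s,\pi_w)$ appearing in the Whittaker newform normalization.
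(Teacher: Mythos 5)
Your proposal is correct and follows essentially the same route as the paper: apply Lemma \ref{lem-op} with $P=1$ to get the projection onto the spherical line, reduce by equivariance to the scalar identity for $g=\left(\begin{smallmatrix}m&\\&1\end{smallmatrix}\right)$, and identify the scalar with $W\left(\begin{smallmatrix}m&\\&1\end{smallmatrix}\right)$ by comparing Mellin transforms with $L(s,\pi)$ — the paper carries out exactly this Mellin computation, landing on $L(s,\mu\chi)L(s,\mu^{-1})=L(s,\pi)$, and sidesteps your detour through $\varphi_{\Phi_0}$ and the Jacquet integral by invoking the defining property of the Whittaker newform directly. The one point you should make explicit is that equality of Mellin transforms implies equality of the two functions of $r$ only given suitable smoothness and decay; the paper justifies this using rapid decay as $r\to\infty$ and polynomial control as $r\to 0$ on both sides, the left side via Lemma \ref{lem-in-C}.
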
 

\begin{proof}
It is enough to verify the corollary in the special case where $g=\left(\begin{smallmatrix} m & \\ & 1 \end{smallmatrix} \right)$.  By Lemma \ref{lem-op} it suffices to show that 
\begin{align} \label{eq}
\mathrm{tr}\,\pi\left(\W_{\left(\begin{smallmatrix} r & \\ & 1 \end{smallmatrix} \right)}\right)=W\left(\begin{smallmatrix} r & \\ & 1 \end{smallmatrix} \right)
\end{align}
for $r \in \RR_{>0}$.
Both sides of this expression are smooth functions of $r$.  It is well-known that the right hand side is rapidly decreasing as $r \to \infty$ and of order at most $r^{-B}$ for some $B>0$ as $r \to 0$ and the same is true of the left hand side by Lemma \ref{lem-in-C}.  Thus to prove that the two are equal it is enough to verify that their Mellin transforms are equal.  
Recall (see \S \ref{sec-measures}) that our choice of Haar measure on $E$ is the Lebesgue measure if $E $ is real and twice the Lebesgue measure (on $\RR^2)$ if $E$ is complex.  Thus, essentially by definition, for $\mathrm{Re}(s)$ sufficiently large one has
$$
2^{[E:\RR]} \pi^{[E:\RR]-1}\int_{\RR_{>0}} W\left(\begin{smallmatrix}r & \\ & 1 \end{smallmatrix} \right)r^{[E:\RR](s-\tfrac{1}{2})}\frac{dr}{r}=L(s,\pi).
$$
On the other hand, by Lemma \ref{lem-op} for $\mathrm{Re}(s)$ sufficiently large
\begin{align*}
&2^{[E:\RR]}\pi^{[E:\RR]-1}\int_{\RR>0}\mathrm{tr}\,\pi\left(\W_{\left(\begin{smallmatrix} r & \\ & 1 \end{smallmatrix} \right)}\right)r^{[E:\RR](s-\tfrac{1}{2})}dr^\times\\
&=2^{[E:\RR]-1}\pi^{[E:\RR]-1}\int_{\RR_{>0}}\int_{E^{\times}  \times E}\mathcal{F}_{1}\left(\begin{matrix}\left( \frac{r}{|y|^{1/[E:\RR]}}\right)^{1/2}y & x \\ &\left( \frac{r}{|y|^{1/[E:\RR]}}\right)^{1/2}\end{matrix} \right)\\& \times \chi\left(\left( \frac{r}{|y|^{1/[E:\RR]}}\right)^{1/2}\right)\mu\chi(y)dy^{\times}r^{[E:\RR](s-\tfrac{1}{2})}dr^\times dx
\\&=2^{[E:\RR]-1}\pi^{[E:\RR]-1}\int_{\RR_{>0}}\int_{E^{\times}  \times E}\mathcal{F}_{1}\left(\begin{matrix}\sqrt{r}y & x \\ & \sqrt{r}\end{matrix} \right)\chi\left(\sqrt{r}\right)\mu\chi(y)|yr|^{s-\tfrac{1}{2}}dy^{\times}dr^\times dx\\
&=\int_{E^\times}\int_{E^{\times}  \times E}\mathcal{F}_{1}\left(\begin{matrix} zy & x \\ & z\end{matrix} \right)\chi(z)\mu\chi(y)|yz^2|^{s-\tfrac{1}{2}}dy^{\times}dz^\times dx\\
&=\int_{E^\times}\int_E \mathcal{F}_{1}\left(\begin{smallmatrix} y & x \\ & z\end{smallmatrix}\right)dx |yz|^{s-1/2}\mu\chi(y)\mu^{-1}(z)dy^{\times}dz^\times\\
&=\int_{E^\times \times E^{\times}}\int_ E e^{-[E:\RR](x\bar{x}+y \bar{y}+z\bar{z})}dx |yz|^{s}\mu\chi(y)\mu^{-1}(z)dy^{\times}dz^\times\\
&=\int_{E^\times \times E^{\times}}e^{-[E:\RR](y \bar{y}+z\bar{z})}  |yz|^{s}\mu\chi(y)\mu^{-1}(z)dy^{\times}dz^\times\\
&=L(s,\mu\chi)L(s,\mu^{-1})=L(s,\pi).
\end{align*}

\end{proof}

\section{Test functions and transfer} \label{sec-transf}

For this section we let $E/F$ be a finite extension of number fields (not necessarily Galois) that is everywhere unramified and split at all infinite places.   Let $G:=\mathrm{Res}_{\OO_E/\OO_F}\GL_2$, let  $K_\infty\leq G(F_\infty)$ be the standard maximal compact subgroup and let $K^\infty \leq G(\A_F^\infty)$ be a compact open subgroup.

Consider the Frechet space 
\begin{align}
\mathcal{C}(G(\A_F)^1):=\mathcal{C}(G(\A_F)^1,K^\infty).
\end{align}
It consists of those smooth functions $f$ on $G(\A_F)$ that are right invariant under $K^\infty$ and satisfy 
\begin{align} \label{L1-bound}
|| X* f *Y ||_{L^1(G(\A_F)^1)}< \infty
\end{align}
for all $X,Y \in U(\mathfrak{g})$, where 
$$
\mathfrak{g}:=\mathrm{Lie}(G(F \otimes_{\QQ}\RR)) \otimes_{\RR}\CC.
$$
We also have the analogous Frechet space $\mathcal{C}(G(F_\infty))$ at infinity; it consists of elements of $C^\infty(G(F_\infty))$ such that \eqref{L1-bound} is valid for all $X,Y$ with $L^1(G(\A_F)^1)$ replaced by $L^1( G(F_\infty))$.  Note that this is not the same as the Harish-Chandra Schwartz space, whose elements may not be integrable.

The spaces
$$
\mathcal{C}(\GL_2(\A_F)^1):=\mathcal{C}(\GL_2(\A_F), K^\infty \cap \GL_2(\A_F^\infty)) \quad \textrm{ and } \quad \mathcal{C}(\GL_2(F_\infty))
$$ 
are defined similarly.
We note that these spaces are algebras under convolution.  Moreover, if $\Pi$ is a cuspidal automorphic representation of $G(\A_F)^1$, then 
$$
\mathrm{tr}\,\Pi:\mathcal{C}(G(\A_F)^1) \lto \CC
$$
is a continuous linear functional (this is part of the main result of \cite{FL}, for example, see Theorem \ref{thm-tf} above).  The assumption that $\Pi$ is cuspidal can be weakened somewhat, but it is not true for an arbitrary admissible representation.

Let $v$ be a place of 
$F$.  We say that $\Psi \in C_c^\infty(\GL_2(F_v))$ is a \textbf{transfer} of $f \in C_c^\infty(G(F_v))$ if 
\begin{align} \label{trace-id}
\mathrm{tr}\, \pi(\Psi)=\mathrm{tr}\, \Pi(f)
\end{align}
is valid for all irreducible admissible representations $\pi$ of $\GL_2(F_v)$ and base change $\Pi$ to $G(F_v)$ (the notion of base change in this context is recalled in \cite[\S 1]{Getz-Ap}).  

\begin{remarks}

\item This notion does not coincide with the usual notion of transfer in the theory of cyclic base change.
\item It would be interesting to define transfers of arbitrary functions in $\mathcal{C}(G(\A_F)^1)$, but we will not attempt this.  Instead we will make do with something weaker (see Lemma \ref{lem-C-trans} below).

\end{remarks}

In this paper we need to make explicit our choices of transfers. 
For the remainder of this section let
$$
K^\infty:=G(\widehat{\OO}_F)
$$
and let $K_F:=K \cap \GL_{2}(\A_F)$.
We will define a continuous algebra homomorphism
\begin{align*}
b_{E/F}:\mathcal{C}(G(\A_F)//K) \lto \mathcal{C}(\GL_2(\A_F)//K_F)
\end{align*}
(see Lemma \ref{lem-C-trans} below).

First recall that for finite places $v$ of $F$ the base change maps
$$
b_{E/F}:C_c^\infty(G(F_v)//K_v) \lto C_c^\infty(\GL_2(F_v)//\GL_2(\OO_{F_v}))
$$
provide examples of transfers in the sense that $b_{E/F}(f)$ is a transfer of $f$ (compare \cite[\S 3.2]{Getz-Ap}).

Second, recall that $E/F$ is assumed to be split at all infinite places. We can therefore define a map
$$
b_{E/F}:\mathcal{C}(G(F_\infty)//K_\infty) \lto \mathcal{C}(\GL_2(F_\infty)//K_{F\infty})
$$
by choosing an identification
\begin{align} \label{G-isom}
G(F_\infty) = \GL_2(F_\infty)^{[E:F]}
\end{align}
taking $K_\infty$ to $K_{F\infty}^{[E:F]}$
and letting
\begin{align} \label{bc-inf}
b_{E/F}(f)(g):=\int_{\GL_2(F_\infty)}\cdots\int_{\GL_2(F_\infty)}f(h_1,\dots,h_{[E:F]-1},h_{[E:F]-1}^{-1}h_{[E:F]-2}^{-1}\dots h_1^{-1}g)dh_1\dots dh_{[E:F]-1}.
\end{align}
  On the dense subspace
$$
\mathcal{C}(\GL_2(F_\infty)//K_\infty)^{\otimes[E:F]}\leq \mathcal{C}(\GL_2(F_\infty)//K_\infty)
$$
this map just coincides with the $[E:F]$-fold convolution, and therefore \eqref{bc-inf} is continuous.
 There is ambiguity in the choice of isomorphism \eqref{G-isom}, but since spherical Hecke algebras are commutative this does not affect the definition of the morphism $b_{E/F}$.

\begin{lem} \label{lem-inf-cont}  Let $\pi$ be a cuspidal automorphic representation of $\GL_2(\A_F)$ and base change $\pi_E$ to $G(\A_F)$.  If
 $f \in \mathcal{C}^1(G(F_\infty)//K_\infty)$, then 
$\mathrm{tr}\,\pi_\infty(b_{E/F}(f))=\mathrm{tr}\,\pi_{E\infty}(f)$.
\end{lem}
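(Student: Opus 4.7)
The plan is to verify the identity on a dense subspace and then extend by continuity. Since $E/F$ is split at the infinite places, we fix an identification $G(F_\infty) = \GL_2(F_\infty)^n$ with $n=[E:F]$ as in \eqref{G-isom}; under this identification, base change at a split place is literally the outer tensor product, so $\pi_{E\infty}$ becomes the $n$-fold external tensor product of $\pi_\infty$ with itself. Both sides of the claimed equality vanish unless $\pi_\infty$ has a nonzero $K_{F\infty}$-fixed vector, and since $\pi_\infty$ is the archimedean component of a cuspidal automorphic representation, its spherical subspace is at most one-dimensional. Reducing to that one-dimensional case, the bi-$K_{F\infty}$-invariant convolution algebra acts on the spherical line through an algebra homomorphism $\lambda_\infty$, and $\mathrm{tr}\,\pi_\infty(h) = \lambda_\infty(h)$ for every bi-$K_{F\infty}$-invariant $h$.

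On the subspace $\mathcal{C}(\GL_2(F_\infty)//K_\infty)^{\otimes n} \subset \mathcal{C}(G(F_\infty)//K_\infty)$, whose density is noted in the paragraph following \eqref{bc-inf}, the map $b_{E/F}$ reduces to iterated convolution by its very definition. For a pure tensor $f = f_1 \otimes \cdots \otimes f_n$ we therefore compute
\begin{align*}
\mathrm{tr}\,\pi_{E\infty}(f) = \prod_{i=1}^{n} \mathrm{tr}\,\pi_\infty(f_i) = \prod_{i=1}^{n} \lambda_\infty(f_i) = \lambda_\infty(f_1 * \cdots * f_n) = \mathrm{tr}\,\pi_\infty\bigl(b_{E/F}(f)\bigr),
\end{align*}
where the middle equality is the multiplicativity of $\lambda_\infty$ (equivalently, the fact that the spherical projector is idempotent).

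To pass to an arbitrary $f \in \mathcal{C}^1(G(F_\infty)//K_\infty)$ we invoke continuity: $b_{E/F}$ is continuous by the paragraph after \eqref{bc-inf}, and both archimedean trace functionals $\mathrm{tr}\,\pi_\infty$ and $\mathrm{tr}\,\pi_{E\infty}$ extend continuously from $C_c^\infty$ to the relevant Schwartz-type space. Each side of the identity is then a continuous linear functional of $f$, and since they agree on the dense subspace they agree everywhere. The only nontrivial technical input here is the archimedean continuity of the trace on $\mathcal{C}^1$; this is expected to follow from the continuous-seminorm bounds in \cite[Theorem 1]{FL} (compare the remark after Theorem \ref{thm-tf}) applied locally at infinity, and is the one place where care is required. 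The remainder of the argument is bookkeeping.
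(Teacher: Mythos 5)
Your proposal is correct and follows essentially the same route as the paper: identify $\pi_{E\infty}$ with the $[E:F]$-fold tensor product of $\pi_\infty$ under \eqref{G-isom}, verify the identity on the dense subspace of pure tensors (where it reduces to the multiplicativity of the spherical character), and extend by continuity of $b_{E/F}$ and of the trace distributions. The paper's own proof is terser—it calls the pure-tensor case ``trivial'' and invokes the continuity of the global distributions $\mathrm{tr}\,\pi_E$ and $\mathrm{tr}\,\pi$ from \cite{FL}—but the substance is identical to what you wrote.
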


\begin{proof}
Under the isomorphism \eqref{G-isom} the representation $\pi_{E\infty}$ corresponds to the $[E:F]$-fold tensor product of $\pi_\infty$.  Thus
if $f$ lies in the dense subset $C^\infty_c(\GL_2(F_\infty)//K_{F\infty})^{\otimes [E:F]}$ then the identity is trivial.  The general case follows from the continuity of the distributions $\mathrm{tr}\,\pi_E$ on $\mathcal{C}(G(\A_F)^1)$ and $\mathrm{tr}\,\pi$ on $\mathcal{C}(\GL_2(\A_F)^1)$.

\end{proof}

The considerations above yield continuous algebra morphisms
\begin{align} \label{star}
b_{E/F}:\mathcal{C}(G(F_\infty)//K_{\infty}) \otimes C_c^\infty(G(\A_F^\infty)//K^\infty)  \lto \mathcal{C}(\GL_2(F_\infty)//K_{F\infty}) \otimes C_c^\infty(\GL_2(\A_F^\infty)//\GL_2(\widehat{\OO}_F)).
\end{align}
\begin{lem} \label{lem-C-trans} The map \eqref{star} extends to a continuous algeba morphism
$$
b_{E/F}:\mathcal{C}(G(\A_F)^1//K^\infty) \lto \mathcal{C}(\GL_2(\A_F)^1//\GL_2(\widehat{\OO}_F))
$$
Moreover, if $\pi$ is a cuspidal automorphic representation of $\GL_2(\A_F)$ with cuspidal base change $\pi_E$ to $\GL_2(\A_E)$ and $f \in \mathcal{C}(G(\A_F)^1//K^\infty)$ then 
$$
\mathrm{tr}\,\pi_E(f)=\mathrm{tr}\,\pi(b_{E/F}(f)).
$$
\end{lem}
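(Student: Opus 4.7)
The plan is to reduce both assertions to the dense subalgebra
\[
\mathcal{A} := \mathcal{C}(G(F_\infty)//K_\infty) \otimes C_c^\infty(G(\A_F^\infty)//K^\infty) \subset \mathcal{C}(G(\A_F)^1//K^\infty),
\]
on which $b_{E/F}$ is already defined via \eqref{star}, and then extend by continuity. For density, note that since $K^\infty$ is open any $f \in \mathcal{C}(G(\A_F)^1//K^\infty)$ admits a formal decomposition
\[
f(g_\infty g^\infty) = \sum_{x \in G(\A_F^\infty)/K^\infty} f_x(g_\infty)\,\one_{xK^\infty}(g^\infty),
\]
where $f_x(g_\infty) := f(g_\infty x)$ and each $f_x$ is smooth on $G(F_\infty)$. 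The operators $X, Y \in U(\mathfrak{g})$ act only in the archimedean directions, so each seminorm $\|X*f*Y\|_{L^1(G(\A_F)^1)}$ decomposes (up to a measure constant involving $K^\infty$) as $\sum_x \|X*f_x*Y\|_{L^1}$; the finiteness of this sum forces truncations to finite $x$ to converge to $f$ in the Fr\'echet topology, and each such truncation lies in $\mathcal{A}$.

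Next I would verify that the restriction of $b_{E/F}$ to $\mathcal{A}$ is continuous for the topology inherited from $\mathcal{C}(G(\A_F)^1//K^\infty)$. The non-archimedean factor is the Satake base change, which maps bi-$K^\infty$-invariant functions of compact support to bi-$\GL_2(\widehat{\OO}_F)$-invariant functions of compact support and is $L^1$-bounded. The archimedean factor is, under the identification \eqref{G-isom}, an $([E:F]-1)$-fold convolution, and the seminorms $\|X*f*Y\|_1$ are controlled by iterated application of $\|f*g\|_1 \le \|f\|_1 \|g\|_1$ on the unimodular group $\GL_2(F_\infty)^{[E:F]}$, combined with the Leibniz rule for invariant derivatives. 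Combining these, $b_{E/F}$ extends by continuity to a continuous linear map on all of $\mathcal{C}(G(\A_F)^1//K^\infty)$, and it is an algebra homomorphism because $\mathcal{A}$ is a subalgebra and dense.

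For the trace identity, both $f \mapsto \mathrm{tr}\,\pi_E(f)$ and $f \mapsto \mathrm{tr}\,\pi(b_{E/F}(f))$ are continuous linear functionals on $\mathcal{C}(G(\A_F)^1//K^\infty)$: the former by the continuity of cuspidal trace functionals recorded just after Theorem \ref{thm-tf}, and the latter by composition. It therefore suffices to check the identity on a pure tensor $f = f_\infty \otimes f^\infty \in \mathcal{A}$. For such $f$ both sides factor over the archimedean and non-archimedean parts; Lemma \ref{lem-inf-cont} handles the archimedean factor, and at the finite places the Satake base change is a transfer in the sense of \eqref{trace-id} by \cite[\S 3.2]{Getz-Ap}, giving $\mathrm{tr}\,\pi^\infty(b_{E/F}(f^\infty)) = \mathrm{tr}\,\pi_E^\infty(f^\infty)$. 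Multiplying yields the desired identity.

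The main obstacle is making the archimedean continuity quantitative on the whole Fr\'echet space $\mathcal{C}(G(F_\infty)//K_\infty)$: bounding $\|X*(f_1 * \cdots * f_{[E:F]})*Y\|_1$ by a product of seminorms of the $f_i$ requires distributing invariant derivatives across a convolution via Leibniz's rule and then repeatedly applying the basic convolution inequality on the unimodular group.
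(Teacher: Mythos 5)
Your proposal is correct and follows essentially the same route as the paper: restrict to the dense subspace $\mathcal{C}(G(F_\infty)//K_\infty)\otimes C_c^\infty(G(\A_F^\infty)//K^\infty)$, verify continuity there (the paper does this by observing the archimedean map is just $[E:F]$-fold convolution), extend by completeness of the target Fr\'echet space, and deduce the trace identity from the continuity of $\mathrm{tr}\,\pi_E$ and $\mathrm{tr}\,\pi$ together with Lemma \ref{lem-inf-cont} and the unramified transfer at finite places. One small simplification: no Leibniz rule is needed for the archimedean seminorms, since a left-invariant derivative passes entirely onto the first factor of a convolution ($X*(f_1*\cdots*f_n)*Y=(X*f_1)*\cdots*(f_n*Y)$), after which $\|f*g\|_1\le\|f\|_1\|g\|_1$ suffices.
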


\begin{proof} By Lemma \ref{lem-inf-cont} $b_{E/F}$ is continuous and linear on the dense subspace 
$$
\mathcal{C}(G(F_\infty)//K_\infty) \otimes C_c^\infty(G(\A_F^\infty)//K^\infty) 
$$ 
of $\mathcal{C}(G(\A_F)^1//K)$.  Since $\mathcal{C}(G(\A_F)//K)$ is a Frechet space, $b_{E/F}$ therefore extends uniquely to a continuous algebra morphism.  To deduce the final assertion of the lemma we invoke the continuity of the distributions $\mathrm{tr}\,\pi_E$ on $\mathcal{C}(G(\A_F)^1)$ and $\mathrm{tr}\,\pi$ on $\mathcal{C}(\GL_2(\A_F)^1)$ together with Lemma \ref{lem-inf-cont}.
\end{proof}

\section{The construction of $V$}

\label{sec-V}

In this section we construct $V \in \mathcal{C}(G(\A_F)^1)$ and use it to prove our main theorem, Theorem \ref{thm-main}.  We assume that $E/F$ is a Galois extension of number fields that is everywhere unramified and split at all infinite places.  We also assume that
$$
\Gal(E/F)=\langle\iota,\tau \rangle
$$
where $\tau$ has odd order.
We let $F'$ be the fixed field of $\iota$ and $F_0$ the fixed field of $\tau$.  

\subsection{The subgroup $H$} \label{ssec-H}

Let $U_2$ be the unitary group over $\OO_{F'}$ whose points in an $\OO_{F'}$-algebra $R$ are given by 
$$
U_2(R):=\{g \in \GL_2(\OO_E \otimes_{\OO_{F'}}R): g=\left(\begin{smallmatrix}  & 1 \\ 1 &\end{smallmatrix} \right){}^{\iota}g^{-t}\left(\begin{smallmatrix}  & 1 \\ 1 &\end{smallmatrix} \right)\}.
$$
Moreover let
\begin{align} \label{H}
H:=\mathrm{Res}_{\OO_{F'}/\OO_F}U_2 \leq G.
\end{align}
Recall that a measurable subset $\mathfrak{F} \leq H(\A_F)$ is a \textbf{fundamental domain} if $H(F)\mathfrak{F}=H(\A_F)$ and $\gamma \mathfrak{F} \cap \mathfrak{F}=\emptyset$  for all $\gamma \in H(F)$.

Let $B_H \leq H$ be the Borel subgroup of upper-triangular matrices.

\begin{lem} \label{lem-F} There exists a fundamental domain $\mathfrak{F} \subseteq H(\A_F)$ contained in a set of the form
$$
\{ \Omega \left( \begin{smallmatrix} t & \\ &t^{-1} \end{smallmatrix}\right)K \cap H(\A_F):t>T\}
$$
for some $T >0$ and some compact subset $\Omega \leq B_H(\A_F)$.
\end{lem}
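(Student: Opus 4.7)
The plan is to deduce the claim from classical adelic reduction theory applied to $H$, which is a connected reductive $F$-group of $F$-split rank $1$.  To verify the latter, observe that $U_2$ is a quasi-split unitary group of $F'$-split rank $1$, with maximal $F'$-split torus given by the image of $\GG_m \hookrightarrow U_2$, $t \mapsto \left(\begin{smallmatrix} t & \\ & t^{-1}\end{smallmatrix}\right)$.  Hence $H = \mathrm{Res}_{\OO_{F'}/\OO_F}U_2$ has $F$-split rank $1$, with minimal $F$-parabolic $B_H$ and maximal $F$-split torus the diagonal $\GG_m \hookrightarrow H$.  Because $E/F$ is everywhere unramified and split at every infinite place, $K_H := K \cap H(\A_F)$ is a maximal compact subgroup of $H(\A_F)$ (hyperspecial at each finite place, the standard maximal compact of $U_2(F'_v) \cong \GL_2(F'_v)$ at each infinite place), and the adelic Iwasawa decomposition $H(\A_F) = B_H(\A_F) K_H$ holds.

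The next step is to invoke the standard construction of Siegel sets (due to Borel; see, e.g., his monograph on arithmetic groups, or Ch.~5 of Platonov--Rapinchuk): for an $F$-rank $1$ group such as $H$, there exist a compact subset $\Omega \subseteq B_H(\A_F)$ and a real number $T > 0$ such that the Siegel set
$$
\mathfrak{S} := \Omega \cdot \left\{ \left(\begin{smallmatrix} t & \\ & t^{-1}\end{smallmatrix}\right) : t > T \right\} \cdot K_H
$$
satisfies $H(F)\mathfrak{S} = H(\A_F)$ and $\{\gamma \in H(F) : \gamma\mathfrak{S} \cap \mathfrak{S} \neq \emptyset\}$ is finite.

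Finally, from $\mathfrak{S}$ one extracts a genuine fundamental domain $\mathfrak{F} \subseteq \mathfrak{S}$ by a standard measurable selection: the finiteness of the overlap group allows one to choose measurably a single representative from each $H(F)$-orbit meeting $\mathfrak{S}$, producing $\mathfrak{F}$ with $H(F)\mathfrak{F} = H(\A_F)$ and $\gamma\mathfrak{F} \cap \mathfrak{F} = \emptyset$ for every $\gamma \in H(F) \setminus \{1\}$.  Since $\Omega$ and the split torus elements lie in $H(\A_F)$ while $K_H \subseteq K$, one has
$$
\mathfrak{F} \subseteq \mathfrak{S} \subseteq \bigcup_{t > T} \left( \Omega \left(\begin{smallmatrix} t & \\ & t^{-1}\end{smallmatrix}\right) K \cap H(\A_F) \right),
$$
which is the required containment.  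The only substantive input is Borel's existence theorem for Siegel sets; the rank verification and the passage from Siegel set to fundamental domain are routine, and I expect no serious obstacle beyond the bookkeeping needed to identify the good maximal compact and Iwasawa decomposition of $H$ relative to the ambient group $G$.
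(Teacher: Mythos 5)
Your proposal is correct and follows essentially the same route as the paper, whose entire proof is to cite Borel's theorem on Siegel sets (Theorem 5.3 of his article \emph{Automorphic forms on reductive groups}) together with the Iwasawa decomposition for $H$. The extra details you supply --- the rank-$1$ verification, the identification of $K\cap H(\A_F)$ as a good maximal compact, and the measurable extraction of a genuine fundamental domain from the Siegel set --- are exactly the routine steps the paper leaves implicit.
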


\noindent Here we regard the matrices $\left( \begin{smallmatrix} t & \\ &t^{-1} \end{smallmatrix}\right)$, $t \in \RR^\times$ as elements of $B_H(F_\infty)$ via the diagonal embedding.

\begin{proof} This follows from \cite[Theorem 5.3]{Borel} together with the Iwasawa decomposition.
\end{proof}

Let $\mathfrak{F}$ be a fundamental domain as in Lemma \ref{lem-F} and let $\Psi \in C_c^\infty(G(\A_F)^1//K)$.
We define
\begin{align} \label{defn}
V:=V_\Psi:=\int_{\mathfrak{F}} \sum_{y \in E^\times}\Psi*\W_{\left(\begin{smallmatrix} y& \\ & 1 \end{smallmatrix} \right)h}dh.
\end{align}

\begin{rem} The operator
$$
\sum_{y \in E^\times}\Psi*\W_{\left(\begin{smallmatrix} y& \\ & 1 \end{smallmatrix} \right)h},
$$
as a function of $h$, is invariant under the subgroup of $B_H(F)$ of matrices of the form $\left(\begin{smallmatrix} a & b \\ & 1 \end{smallmatrix} \right)$ on the left and $K$ on the right.  It is not invariant under all of $H(F)$ on the left, however, hence the need for a choice of fundamental domain.  We have chosen our fundamental domain in order to guarantee the convergence of $V$.  
\end{rem}

\begin{prop}
One has that $V \in \mathcal{C}(G(\A_F)^1)$. 
\end{prop}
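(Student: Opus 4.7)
The strategy is to bound $||X * V * Y||_{L^1(G(\A_F)^1)}$ for every $X, Y \in U(\mathfrak{g})$; smoothness of $V$ will follow from the same bounds applied to arbitrary differential operators, and the formal interchange of $X, Y$ with the sum and integral defining $V$ will be justified \emph{a posteriori} by the absolute convergence we obtain. Thus it suffices to show
\begin{align*}
\int_{\mathfrak{F}} \sum_{y \in E^\times} \left|\left| X * \Psi * \W_{\left(\begin{smallmatrix} y & \\ & 1\end{smallmatrix}\right) h} * Y \right|\right|_{L^1(G(\A_F)^1)}\, dh < \infty.
\end{align*}

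The first step is to parametrize $\mathfrak{F}$ using Lemma \ref{lem-F}: write $h = \omega \left(\begin{smallmatrix} t & \\ & t^{-1}\end{smallmatrix}\right) k$ with $\omega = \left(\begin{smallmatrix} a_\omega & b_\omega \\ & d_\omega\end{smallmatrix}\right)$ in a compact set $\Omega \subseteq B_H(\A_F)$, $t > T$, and $k \in K$. Iwasawa-decomposing one finds
\begin{align*}
\left(\begin{smallmatrix} y & \\ & 1\end{smallmatrix}\right) h = \left(\begin{smallmatrix} 1 & y b_\omega/d_\omega \\ & 1\end{smallmatrix}\right) \left(\begin{smallmatrix} (d_\omega t^{-1}) m & \\ & d_\omega t^{-1}\end{smallmatrix}\right) k, \quad m := y a_\omega t^2/d_\omega.
\end{align*}
Using the explicit formulas for $\W_g$ on Iwasawa-decomposed elements at each place, $\W_{\left(\begin{smallmatrix} y & \\ & 1\end{smallmatrix}\right) h}$ equals a unipotent phase $\psi(yb_\omega/d_\omega)$ times a twist of $\W_{\left(\begin{smallmatrix} m & \\ & 1\end{smallmatrix}\right)}$ by elements of $\Omega$ and $K$. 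By bi-$K$-invariance of $\Psi$ and compactness of $\Omega \cdot K$, these twists can be uniformly absorbed into $\Psi$, reducing the problem to bounding $||X * \tilde\Psi * \W_{\left(\begin{smallmatrix} m & \\ & 1\end{smallmatrix}\right)} * Y||_{L^1}$ where $\tilde\Psi$ ranges over a bounded family in $C_c^\infty(G(\A_F)^1)$.

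Next, the finite-place formula for $\W$ (Corollary \ref{cor-CSS}) carries a factor $\one_{\OO_E}(m_v)$ at every finite $v$, and since $\omega$ lies in a compact set this forces $y$ to belong to a single fractional ideal $\mathfrak{a}^{-1}$ of $E$ depending only on $\Omega$. By the product formula, nonzero elements of $\mathfrak{a}^{-1}$ satisfy $|y|_\infty \geq c_\mathfrak{a} > 0$, so there exists a $T' \geq T$ with $|m|_\infty \geq c_\Omega |y|_\infty t^{2[E:\RR]} > 1$ for all $t > T'$ and all contributing $y$. The portion of the integral over $t \in [T, T']$, together with the finitely many $y \in \mathfrak{a}^{-1}$ for which $|m|_\infty \leq 1$, is compactly supported in the parameters and trivially bounded via the second ($B$-exponent) bound in Lemma \ref{lem-in-C}. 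For the remaining region, Lemma \ref{lem-in-C} gives, uniformly over the compact family $\tilde\Psi$ and for any $A > 0$,
\begin{align*}
\left|\left| X * \tilde\Psi * \W_{\left(\begin{smallmatrix} m & \\ & 1\end{smallmatrix}\right)} * Y \right|\right|_{L^1} \ll_A |m|_\infty^{-A};
\end{align*}
taking $A$ large enough, the residual sum-integral
\begin{align*}
\int_{T'}^\infty \sum_{y \in \mathfrak{a}^{-1}} (|y|_\infty t^{2[E:\RR]})^{-A}\, p(t)\, dt
\end{align*}
converges, the $y$-sum as a Minkowski lattice sum and the $t$-integral against the polynomially bounded modular factor $p(t)$ for $dh$ in the chosen parametrization.

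The principal obstacle is the bookkeeping in the first step: one must verify carefully that the left- and right-translations arising in the Iwasawa decomposition of $\left(\begin{smallmatrix} y & \\ & 1\end{smallmatrix}\right) h$ can be absorbed into $\Psi$ with constants independent of $(\omega, k, y, t)$, and that the archimedean and nonarchimedean local factors of $\W_{\left(\begin{smallmatrix} y & \\ & 1\end{smallmatrix}\right) h}$ combine correctly so that the archimedean decay from Lemma \ref{lem-in-C} marries the nonarchimedean integrality constraint on $y$. Once this is arranged, all estimates reduce to standard applications of the lemmas already in place.
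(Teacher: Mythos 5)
Your argument is essentially the paper's proof: parametrize $\mathfrak{F}$ by the Iwasawa decomposition, use the fact that left translation by central and unipotent elements and right translation by $K$ only change $\W_h$ by a unimodular scalar to reduce to $\W_{\left(\begin{smallmatrix} m & \\ & 1\end{smallmatrix}\right)}$ with $m$ essentially $y t^2 a\,{}^{\iota}a$, truncate the $y$-sum to a fractional ideal $\mathfrak{a}^{-1}$ via the nonarchimedean support condition, and invoke Lemma \ref{lem-in-C} for the archimedean decay. One point you elide: Lemma \ref{lem-in-C} is a purely archimedean statement, whereas the $L^1$-norm you need is over $G(\A_F)^1$; the finite-adelic factor of $\W_{\left(\begin{smallmatrix} m & \\ & 1\end{smallmatrix}\right)}$ is $|m|\one_m$, whose $L^1$-mass is \emph{not} uniformly bounded in $m$ — it is a divisor-type quantity, and the paper supplies the estimate $\int_{G(\A_F^\infty)}\one_m(g)\,dg \ll_\varepsilon |m|^{-1-\varepsilon}$, contributing a factor $\ll |y a\,{}^{\iota}a|_\infty^{\varepsilon}$ that must then be absorbed by the archimedean decay. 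This does not break your argument (take $A$ large relative to $\varepsilon$), but it is a step you need to state rather than fold silently into the uniform bound $\ll_A |m|_\infty^{-A}$.
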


\begin{proof} Let $X, Y \in U(\mathfrak{g})$.  Then by the Iwasawa decomposition
\begin{align*}
&||X*V*Y||_{L^1(G(\A_F)^1)}\\&=\int_{G(\A_F)^1}\left|X*\int_{\mathfrak{F}} \sum_{y \in E^\times}\Psi*\W_{\left(\begin{smallmatrix} y& \\ & 1 \end{smallmatrix} \right)h}dh*Y(g)\right|dg\\
&=\int_{G(\A_F)^1}\left|X*\int_{\left(\begin{smallmatrix} ta& t^{-1}ab\\ & t^{-1}{}^{\iota}a^{-1} \end{smallmatrix} \right)k_H \in \mathfrak{F}} \sum_{y \in E^\times}\Psi*\W_{\left(\begin{smallmatrix} yta& yt^{-1}ab\\ & t^{-1}{}^{\iota}a^{-1} \end{smallmatrix} \right)k_H}dt^\times da^\times dbdk_H*Y(g)\right|dg
\end{align*}
where $t \in \RR_{>0}$ is embedded diagonally in $E_\infty^\times$,  $dt^\times$ (resp.~$da^\times$) is our standard Haar measure on $\RR^\times$ (resp.~$(\A_E^\times)^1$), $db$ is a translation invariant measure on the $1$-dimensional free $\A_F$-module $\{x \in \A_E:\mathrm{tr}_{E/F'}(x)=0\}$, and $dk_H$ is a Haar measure on $K \cap H(\A_F)$.
As above, let $U \leq \GL_2$ be the unipotent radical of the Borel subgroup of upper-triangular matrices.
Replacing $h$ by $zuhk$ for $(z,u,k) \in Z_{G}(\A_F) \times \mathrm{Res}_{\OO_E/\OO_F}U(\A_F) \times K$ multiplies $\W_{h}(g)$ by a scalar of norm $1$ independent of $g$ and hence the above is equal to  
\begin{align*}
\int_{G(\A_F)^1}\left|X*\int_{\left(\begin{smallmatrix} ta& t^{-1}ab\\ & t^{-1}{}^{\iota}a^{-1} \end{smallmatrix} \right)k_H \in \mathfrak{F}} \sum_{y \in E^\times}\Psi*\W_{\left(\begin{smallmatrix} yt^2a\,{}^{\iota}a& \\ & 1 \end{smallmatrix} \right)}dt^\times da^\times dbdk_H*Y(g)\right|dg
\end{align*}
By definition, $\W_{\left(\begin{smallmatrix} yt^2a\,{}^{\iota}a& \\ & 1 \end{smallmatrix} \right)}$ vanishes identically if $(ya\,{}^{\iota}a)^\infty \not \in \widehat{\mathcal{D}}^{-1}_E$.  
Thus, since the integral over $a$ is compactly supported, the sum over $y$ above can be truncated to the subset $y \in \mathfrak{a}^{-1}-0$ for some ideal $\mathfrak{a} \subseteq \OO_E$. 
Moreover, for any $\varepsilon>0$, one has
$$
\int_{G(\A_F^\infty)}\one_m(g)dg \ll_\varepsilon |m|^{-1-\varepsilon}
$$
for $m \in \widehat{\OO}_E \cap \A_E^\times$, and hence the above is bounded by a constant depending on $\varepsilon$ times
\begin{align*}
\int_{G(F_\infty)}\left|X*\int_{\left(\begin{smallmatrix} ta& t^{-1}ab\\ & t^{-1}{}^{\iota}a^{-1} \end{smallmatrix} \right)k_H \in \mathfrak{F}} \sum_{y \in \mathfrak{a}^{-1}-0}\Psi*\W_{\left(\begin{smallmatrix} yt^2a\,{}^{\iota}a& \\ & 1 \end{smallmatrix} \right)_\infty}dt^\times |ya\,{}^{\iota}a|_\infty^{\varepsilon}da^\times dbdk_H*Y(g)\right|dg
\end{align*}
Notice also that the integral over $t$ can be truncated to the region $t>T$ for some $T>0$ by our choice of $\mathfrak{F}$ (see Lemma \ref{lem-F}).  Thus the integral above is bounded by Lemma \ref{lem-in-C}.  
\end{proof}

Finally, we prove a proposition which motivated our definition of $V$ in the first place.  Recall that if $\Pi$ is a cuspidal automorphic representation of $G(\A_F)^1$ distinguished by $H(\A_{F})^1=H(\A_F)$ then it is the base change of a cuspidal automorphic representation $\Pi'$ of $\GL_2(\A_{F'})^1$.  Moreover, in this case there is a certain $H(\A_{F})$-invariant linear functional on the space of Whittaker functions of $\Pi_S$ with respect to $\psi_{S}$ denoted in \cite[\S 3.3 and Definition 10.1]{FLO} by 
\begin{align} \label{a}
\alpha^{\Pi'_S}:=\prod_{v'|S} \alpha^{\Pi_{v'}'}_{\left(\begin{smallmatrix} & 1 \\ 1 & \end{smallmatrix} \right)}
\end{align}
where the product is over places $v'$ of $F'$ dividing $S$.    Since $H$ is quasi-split, these linear functionals $\alpha^{\Pi'_S}$ are nonzero by \cite[Corollary 10.3 and Remark 10.4]{FLO}.  We note that these distributions depend on the choice of measure on $H(F) \backslash H(\A_F)$ (see loc.~cit.).

\begin{prop} \label{prop-FLO} If $\Pi$ is a 
cuspidal automorphic representation of $G(\A_F)^1$, spherical at all places and $f \in C_c^\infty(G(\A_F)^1//K^S)$ then
$$
\mathrm{tr}\,\Pi(f*V)=\begin{cases}\sqrt{d}_E2\mathrm{tr}\,\Pi(f*\Psi)\alpha^{\Pi'_S}(W_{0S})L(1,\Pi' \times \Pi'^{\vee} \times \eta^S) &\textrm{ if }\Pi \cong {}^{\iota} \Pi\\
0& \textrm{ otherwise.}\end{cases}
$$
Here $\Pi'$ is a cuspidal automorphic representation of $G'(\A_F)^1=\GL_2(\A_{F'})^1$ whose base change to $G(\A_F)^1$ is $\Pi$ and $W_{0S} \in \mathcal{W}(\Pi_S,\psi_S)$ is the Whittaker newform.
\end{prop}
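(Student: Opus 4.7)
The plan is to reduce the computation of $\mathrm{tr}\,\Pi(f*V)$ to evaluating the period of the spherical cusp form $\varphi_0$ over $H(F)\backslash H(\A_F)$, and then to invoke the refinement by Feigon--Lapid--Offen of Jacquet's theorem identifying this period with a special value of the Asai-type $L$-function.

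First I would observe that $V$ is bi-$K$-invariant: $\Psi$ is by hypothesis, and each $\W_g$ is bi-$K$-invariant in its convolution argument, as is immediate from the formulas preceding Corollaries~\ref{cor-CSS} and \ref{cor-arch-CSS}. Consequently $\Pi(V)$ annihilates the orthogonal complement of the spherical line and acts as $\lambda_V \cdot P_{\mathrm{sph}}$ for some scalar $\lambda_V$, where $P_{\mathrm{sph}}$ is the orthogonal projection onto the spherical line. Pulling $\Psi$ out of the defining integral writes $V = \Psi * V'$ with
\begin{align*}
V' := \int_{\mathfrak{F}} \sum_{y \in E^\times} \W_{\left(\begin{smallmatrix}y & \\ & 1\end{smallmatrix}\right) h}\, dh.
\end{align*}
Since $\Psi$ is also bi-$K$-invariant, $\Pi(\Psi) = \lambda_\Psi P_{\mathrm{sph}}$, and a brief manipulation of traces yields
$\mathrm{tr}\,\Pi(f*V) = \lambda_{V'}\,\mathrm{tr}\,\Pi(f*\Psi)$, where $\lambda_{V'}$ is the spherical eigenvalue of $\Pi(V')$. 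In particular this avoids having to divide by $\lambda_\Psi$.

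Next I would apply the Hecke-operator identity $\Pi(\W_g) W^{\varphi_0} = W^{\varphi_0}(g)\, W^{\varphi_0}$ furnished place by place by Corollaries~\ref{cor-CSS} and \ref{cor-arch-CSS}, after factoring the global Whittaker newform $W^{\varphi_0}$ as a product of local newforms. Combining this with \eqref{move-be} to recognize $W^{\varphi_0}\!\left(\left(\begin{smallmatrix}y&\\&1\end{smallmatrix}\right)h\right) = W^{\varphi_0}_y(h)$ and collapsing the sum via the Whittaker expansion \eqref{Wh-exp} gives
\begin{align*}
\lambda_{V'} = \int_{\mathfrak{F}} \sum_{y \in E^\times} W_y^{\varphi_0}(h)\, dh = \sqrt{d}_E \int_{\mathfrak{F}} \varphi_0(h)\, dh = \sqrt{d}_E \int_{H(F)\backslash H(\A_F)} \varphi_0(h)\, dh,
\end{align*}
the last equality using that $\mathfrak{F}$ is a fundamental domain.

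The final step is to invoke the theorem of Jacquet, refined and made quantitative in \cite{FLO}: the unitary period above vanishes unless $\Pi$ descends to a cuspidal representation $\Pi'$ of $\GL_2(\A_{F'})$, which for cuspidal spherical $\Pi$ is equivalent to $\Pi \cong {}^\iota \Pi$ by cyclic base change for the quadratic extension $E/F'$; this yields the vanishing in the second case of the proposition. In the distinguished case, the period factors as $2\,\alpha^{\Pi'_S}(W_{0S})\,L(1,\Pi'\times\Pi'^\vee\times\eta^S)$, the local factors at places outside $S$ being computed explicitly to produce the partial Asai-type $L$-function. Combining this with the identity $\mathrm{tr}\,\Pi(f*V) = \lambda_{V'}\,\mathrm{tr}\,\Pi(f*\Psi)$ completes the proof. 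The principal obstacle, in my view, is careful bookkeeping: one must match the measure normalizations of \cite{FLO} against those fixed in Section~\ref{sec-measures}, align the matrix $\left(\begin{smallmatrix}&1\\1&\end{smallmatrix}\right)$ entering the definition \eqref{a} of $\alpha^{\Pi'_S}$ with the particular form of $H$ used here, and justify the interchange of the sum over $y \in E^\times$ and the integral over $\mathfrak{F}$ with the action of $\Pi$ using the absolute convergence already established in the preceding proposition.
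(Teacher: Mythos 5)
Your proposal is correct and follows essentially the same route as the paper: use the Hecke-eigenvector property of the $\W_g$ (Corollaries \ref{cor-CSS} and \ref{cor-arch-CSS}) to reduce $\mathrm{tr}\,\Pi(f*V)$ to $\mathrm{tr}\,\Pi(f*\Psi)$ times the sum $\int_{\mathfrak{F}}\sum_{y}W^{\varphi}\left(\left(\begin{smallmatrix}y&\\&1\end{smallmatrix}\right)h\right)dh$, collapse that sum via the Whittaker expansion \eqref{Wh-exp} into $\sqrt{d}_E$ times the unitary period of $\varphi$ over $H(F)\backslash H(\A_F)$, and conclude by the period formula of \cite{FLO}. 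The only cosmetic difference is that you phrase the projection onto the newform line via bi-$K$-invariance and a factorization $V=\Psi*V'$, whereas the paper computes the trace directly as a matrix coefficient against $\varphi$; the substance is identical.
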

\begin{proof}
By construction the operator $\Pi(V)$ projects the space of $\Pi$ onto the line spanned by the Whittaker newform $\varphi$ of $\Pi$ as does $\Pi(\Psi)$.   Thus
\begin{align*}
\mathrm{tr}\,\Pi(f*V)&=\int_{ G(F) \backslash G(\A_F)^1} \frac{(\Pi(f*\int_{\mathfrak{F}}  \sum_{y \in E^\times}\Psi*\W_{\left(\begin{smallmatrix} y& \\ & 1 \end{smallmatrix} \right) h}dh)\varphi)(g)\overline{\varphi}(g)}{||\varphi||^2_2}dg\\
&=\int_{\mathfrak{F}} \sum_{y \in E^\times} W^{\varphi }\left(\left(\begin{smallmatrix} y& \\ & 1 \end{smallmatrix} \right) h\right)dh\int_{ G(F) \backslash G(\A_F)^1}\frac{(\Pi(f*\Psi)\varphi)(g)\overline{\varphi}(g)}{||\varphi||_2^2}dg\\
&=\mathrm{tr}\,\Pi(f*\Psi)\int_{\mathfrak{F}} \sum_{y \in E^\times} W^{\varphi }\left(\left(\begin{smallmatrix} y& \\ & 1 \end{smallmatrix} \right) h\right)dh.
\end{align*}

Recognizing the Whittaker expansion in the expression above (compare \eqref{Wh-exp}) we obtain
\begin{align*}
&\int_{\mathfrak{F}} \sum_{y \in E^\times} W^{\varphi }\left(\left(\begin{smallmatrix} y& \\ & 1 \end{smallmatrix} \right) h\right)dh
=\sqrt{d}_E\int_{H(F) \backslash H(\A_F)}  \varphi (h)dh.
\end{align*}
The proposition now follows from \cite[Theorem 10.2]{FLO}.

\end{proof}

\subsection{Proof of Theorem \ref{thm-main}} \label{ssec-proof}

By Theorem \ref{thm-tf} we have that
\begin{align} \label{first-sum}
TF(f^{G_0}*b_{E/F_0}(V))=\mathrm{tr}\,R_{\mathrm{cusp}}(f^{G_0}*b_{E/F_0}(V)) &=\sum_{\pi_0} \mathrm{tr}\,\pi_0(f^{G_0}*b_{E/F_0}(V))\\
&=\sum_{\pi_0} \mathrm{tr}\,\pi_0(f^{G_0})\mathrm{tr}\,\pi_0(b_{E/F_0}(V)) \nonumber
\end{align}
where the (absolutely convergent) sum is over all cuspidal automorphic representations of $G_0(\A_F)^1$.  
Recall that each $\pi_{0}$ admits a unique cuspidal base change $\pi_{0E}$ to $G(\A_F)$, and $\pi_{0E} \cong \sigma_{0E}$ if and only if $\pi_0 \cong \sigma_0 \otimes \chi$ for some character $\chi$ of $F_0 \backslash \A_{F_0}^\times$ trivial on  $\N_{E/F_0}(\A_{F_0}^\times)$.  This was proven by Langlands \cite{LanglBC} in the prime degree case, and for the deduction of the general case, which is easy since the extension $E/F'$ has odd degree, see \cite{LapidRog}. 

Thus, by Lemma \ref{lem-C-trans}, the sum \eqref{first-sum} is equal to 
\begin{align} \label{just-bc}
\sum_{\pi_0} \mathrm{tr}\,\pi_{0E}(f)\mathrm{tr}\,\pi_{0E}(V)=[E:F_0]\sum_{\Pi:\Pi \cong \Pi^{\tau}} \mathrm{tr}\,\Pi(f)\mathrm{tr}\,\Pi(V)
\end{align} 
where the latter sum is over all cuspidal automorphic representations of $G(\A_F)^1$ invariant under $\tau$.  The theorem now follows from Proposition \ref{prop-FLO}.
\qed

\section*{Acknowledgments}

We thank R.~Langlands, B.~C.~Ng\^o, and P.~Sarnak for their generous help and encouragement over the past few years, and E.~Lapid for pointing out a mistake in an earlier version of this paper that lead to the simpler approach exposed here.  The first author also thanks H.~Hahn for her constant encouragement and help with proofreading.



\end{document}